\documentclass[a4paper,12pt]{amsart}

\usepackage[utf8x]{inputenc}
\usepackage[english]{babel}

\usepackage{textcase}
\usepackage{hyperref}
\usepackage{setspace,amsmath,amssymb,amscd,amsthm,amsfonts}

\usepackage[left=2.5cm,right=2.5cm,top=2.5cm,bottom=2.5cm]{geometry}
\usepackage{xcolor}

\usepackage{import, xifthen, pdfpages, transparent}

\newcommand{\K}{\mathcal K}
\newcommand{\RR}{\mathbb R}
\newcommand{\CC}{\mathbb C}
\newcommand{\ZZ}{\mathbb Z}
\renewcommand{\le}{\leqslant}
\renewcommand{\ge}{\geqslant}

\newcommand{\R}{\mathcal R}
\newcommand{\kk}{\mathbf{k}}

\newtheorem{formula}{}[section]
\newtheorem{corollary}[formula]{Corollary}
\newtheorem{lemma}[formula]{Lemma}
\newtheorem{theorem}[formula]{Theorem}
\newtheorem{proposition}[formula]{Proposition}
\theoremstyle{definition}
\newtheorem{definition}[formula]{Definition}
\newtheorem{example}[formula]{Example}
\newtheorem{construction}[formula]{Construction}
\theoremstyle{remark}
\newtheorem*{remark}{Remark}

\numberwithin{equation}{section}

\DeclareMathOperator{\conv}{conv}
\DeclareMathOperator{\sk}{sk}

\DeclareMathOperator{\Perm}{Perm}
\DeclareMathOperator{\relint}{relint}
\DeclareMathOperator{\Tor}{Tor}
\DeclareMathOperator{\mdeg}{mdeg}

\begin{document}
\makeatletter
    \def\@@and{and}
\makeatother

\title[Permutohedral complex and complements of diagonal arrangements]{Permutohedral complex and complements of diagonal subspace arrangements}
\author{Taras Panov}
\address{Faculty of Mathematics and Mechanics, Moscow State University, Moscow, Russia;\newline
Institute for Information Transmission Problems (Kharkevich Institute), Russian Academy of Sciences, Moscow, Russia;\newline
Steklov Mathematical Institute of Russian Academy of Sciences, Moscow, Russia;\newline
National Research University Higher School of Economics, Moscow, Russia}
\email{\href{mailto:tpanov@mi-ras.ru}{tpanov@mi-ras.ru}}

\author{Vsevolod Tril}
\address{Faculty of Mathematics and Mechanics, Moscow State University, Moscow, Russia;\newline
National Research University Higher School of Economics, Moscow, Russia}
\email{\href{mailto:vsevolod.tril@math.msu.ru}{vsevolod.tril@math.msu.ru}}
\thanks{The work was supported by the Russian Science Foundation under grant no.~23-11-00143, https://rscf.ru/project/23-11-00143/
The second author is a recipient of a scholarship from the Theoretical Physics and Mathematics Advancement Foundation ``BASIS''
}
\subjclass[2020]{57S12, 14N20, 55N45, 57T30}

\begin{abstract}
The complement of an arrangement of diagonal subspaces $x_{i_1} = \cdots = x_{i_k}$ in the real space is defined by a simplicial complex~$\K$.
In this paper, we prove that the complement of a diagonal subspace arrangement is homotopy equivalent to a subcomplex $\Perm(\K)$ of faces of the permutohedron. The product in the cohomology ring of the complement of a diagonal arrangement is then described via Saneblidze and Umble's cellular approximation of the diagonal map in the permutohedron. We consider the projection from the permutohedron to the cube and prove that the Saneblidze--Umble diagonal maps to the diagonal constructed by Li Cai for describing the product in the cohomology of a real moment-angle complex.
\end{abstract}

\maketitle

\section{Introduction}
Subspace arrangements play an important role in many constructions of combinatorics, algebraic and symplectic geometry, as well as in the theory of mechanical systems. They first appeared in Arnold's paper~\cite{Arn69}, where he introduced the arrangement of complex hyperplanes $\{z_i = z_j \}$ and shown that it is the Eilenberg--Mac Lane space of the pure braid group. 
The cohomology ring of this space was also described therein.

Two classes of arrangements are of particular interest.
The first class is the coordinate subspace arrangements, which have been extensively studied in toric topology. The complements of coordinate subspace arrangements in $m$-dimensional space are encoded by simplicial complexes $\K$ on the set of $m$ elements. In~\cite{BP00} it was proved that the complement of a complex (respectively, real) coordinate subspace arrangement is homotopy equivalent to the moment-angle complex $\mathcal Z_{\mathcal K}$ (respectively, real moment-angle complex $\mathcal R_{\mathcal K}$). The same paper presented a dga model for the cohomology ring of the complement of a complex coordinate arrangement, constructed using a cellular decomposition of the moment-angle complex. For some classes of simplicial complexes, the homotopy type of these spaces was described explicitly.

The multiplicative structure in the cohomology of real moment-angle complexes $\mathcal R_{\mathcal K}$ was studied by Li Cai~\cite{LiCai}. Using the standard cellular decomposition of the cube, he constructed a dga-model for the cohomology ring of a real moment-angle complex.

Another important class is the diagonal subspace arrangements, whose complements are also encoded by simplicial complexes~$\mathcal K$.
It was introduced in~\cite{PRW}, where the cohomology groups of real diagonal arrangements were calculated via the bar construction of the Stanley--Reisner ring.
These results were further developed in the works~\cite{BP00} and~\cite{Dobr09}, where a connection was established between the complements of diagonal arrangements and loop spaces of polyhedral products (in particular, of moment-angle complexes). Moreover, it was proved in~\cite{T} that for a certain family of simplicial complexes, the corresponding moment-angle complex is homotopy equivalent to the double suspension of the complement of the diagonal subspace arrangement.

In this paper we prove that the complement of the diagonal arrangement corresponding to a simplicial complex~$\K$ is homotopy equivalent to a subcomplex $\Perm(\K)$ of faces in the permutohedron, and describe the combinatorics of this complex.
Analyzing the structure of the cellular chain complex of $\Perm(\K)$, we prove the following theorem.

\begin{theorem}\label{theorem-intro1}
Let $\K$ be a simplicial complex on the vertex set $[m]$, and let $D_\RR(\K)$ be the corresponding complement of a diagonal arrangement.
Then for any commutative ring $\kk$ with unit, there is an isomorphism of $\kk$-modules
\[
  H^q(D_\RR(\K); \kk) \cong \Tor_{\Lambda[\K]}^{q-m}(\kk, \kk)_{(1, \ldots, 1 )},
\]
where $\Lambda[\K]$ is the exterior Stanley--Reisner algebra of~$\K$.
\end{theorem}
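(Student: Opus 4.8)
The plan is to compute $H^*(D_\RR(\K);\kk)$ through the cellular cochain complex of $\Perm(\K)$ and then to recognise that complex, up to the degree shift by $m$, as the multidegree $(1,\dots,1)$ part of the reduced bar complex computing $\Tor_{\Lambda[\K]}(\kk,\kk)$.

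\textbf{Step 1 (from the arrangement to $\Perm(\K)$).} By the homotopy equivalence $D_\RR(\K)\simeq\Perm(\K)$ proved above, it is enough to work with the cellular structure of $\Perm(\K)$. The faces of the permutohedron $P^{m-1}$ are indexed by ordered partitions $S_1\mid\dots\mid S_n$ of $[m]$, such a face having dimension $m-n$, with face poset given by refinement of partitions; the cells of $\Perm(\K)$ are exactly those $S_1\mid\dots\mid S_n$ all of whose blocks lie in $\K$. As subsets of simplices are simplices, this is genuinely a subcomplex, and $C^q(\Perm(\K);\kk)$ is the free $\kk$-module on the ordered partitions of $[m]$ into $n=m-q$ simplices of~$\K$.

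\textbf{Step 2 (matching with the bar complex).} Equip $\Lambda[\K]$ with the $\ZZ^m$-grading $\mdeg v_i=e_i$ and let $B_\bullet=\bigoplus_n\bigl(\overline{\Lambda[\K]}\,\bigr)^{\otimes n}$ be the reduced bar complex, $\overline{\Lambda[\K]}$ the augmentation ideal. Since $v_i^2=0$ and squarefree monomials outside $\K$ vanish, a $\kk$-basis of the multidegree $(1,\dots,1)$ component of $B_n$ is $\{\,v_{S_1}\otimes\dots\otimes v_{S_n}: S_1\sqcup\dots\sqcup S_n=[m],\ S_i\in\K\,\}$, which is exactly the set of $n$-block cells of $\Perm(\K)$. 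Under this bijection the bar differential
\[
  \partial(v_{S_1}\otimes\dots\otimes v_{S_n})=\sum_{i=1}^{n-1}(-1)^i\,v_{S_1}\otimes\dots\otimes v_{S_i}v_{S_{i+1}}\otimes\dots\otimes v_{S_n}
\]
(the two outer summands vanish because the augmentation kills $\overline{\Lambda[\K]}$) becomes the signed sum over the codimension-one faces of $P^{m-1}$ obtained by merging two adjacent blocks, a summand being zero precisely when $S_i\cup S_{i+1}\notin\K$, i.e.\ when the merged partition is not a cell of $\Perm(\K)$; this is the cellular coboundary. Hence, once the signs are matched, $\bigl(C^\bullet(\Perm(\K);\kk),\delta\bigr)\cong\bigl(B_{m-\bullet}^{(1,\dots,1)},\partial\bigr)$, and
\[
  H^q(D_\RR(\K);\kk)\cong H^q(\Perm(\K);\kk)\cong H_{m-q}\bigl(B_\bullet^{(1,\dots,1)}\bigr)=\Tor_{\Lambda[\K]}^{-(m-q)}(\kk,\kk)_{(1,\dots,1)}=\Tor_{\Lambda[\K]}^{q-m}(\kk,\kk)_{(1,\dots,1)}.
\]

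\textbf{The main obstacle.} The delicate point is the sign matching: one must orient the cells of $P^{m-1}$ so that the incidence numbers coincide with the Koszul-type signs $(-1)^i$ of $\partial$ together with the shuffle sign in the exterior product $v_S\cdot v_T=\pm v_{S\cup T}$. I would fix orientations via the product decomposition of a face $S_1\mid\dots\mid S_n$ as $P^{|S_1|-1}\times\dots\times P^{|S_n|-1}$ and track their behaviour under block merges, using the case $\K=\Delta^{m-1}$ (where $\Perm(\K)=P^{m-1}$ is contractible and $\Tor_{\Lambda[v_1,\dots,v_m]}(\kk,\kk)_{(1,\dots,1)}=\kk$ is concentrated in homological degree $m$) as a consistency check.
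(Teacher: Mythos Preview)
Your approach is essentially the same as the paper's: reduce to the cellular cochain complex of $\Perm(\K)$ via the deformation retraction, and then identify that complex with the $(1,\dots,1)$-graded piece of the reduced bar construction of $\Lambda[\K]$. The paper resolves the sign-matching obstacle you flag by invoking Milgram's explicit formula for the cellular boundary on $\Perm^{m-1}$ (with signs $(-1)^{m_1+\cdots+m_{j-1}+|M|}\mathop{\mathit{shuff}}(M;U_j\setminus M)$, where $m_i=|U_i|-1$), dualising it, and then checking directly that the shuffle sign $\mathop{\mathit{shuff}}(U_j;U_{j+1})$ cancels against the sign $(-1)^{\delta_j}$ arising from rewriting $X_jX_{j+1}$ as an increasingly ordered exterior monomial, while the remaining $(-1)^{m_1+\cdots+m_{j-1}+|U_j|}$ matches the bar sign.

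One small correction: your displayed bar differential $\partial(v_{S_1}\otimes\cdots\otimes v_{S_n})=\sum_i(-1)^i\,v_{S_1}\otimes\cdots\otimes v_{S_i}v_{S_{i+1}}\otimes\cdots\otimes v_{S_n}$ is the ungraded formula. For the graded bar complex the sign at position $i$ is $(-1)^{i+|S_1|+\cdots+|S_i|}$ (coming from $\overline a_k=(-1)^{\deg a_k+1}a_k$), and it is this that matches Milgram's $\varepsilon$. Your instinct to orient the cells via the product decomposition $\Perm^{|S_1|-1}\times\cdots\times\Perm^{|S_n|-1}$ is exactly what underlies Milgram's formula, so citing it directly is the cleanest way to close the gap.
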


To describe the product in the cohomology ring $H^*(D_\RR(\K))$,
we use the cellular diagonal approximation of the permutohedron,  $\Delta_{SU}$, constructed by Saneblidze and Umble~\cite{SU}. In~\cite{SU}, a projection $\rho \colon \Perm^{m-1} \to I^{m-1}$ from the permutohedron to the cube was also constructed. We prove that this projection maps the Saneblidze--Umble diagonal to the Li Cai diagonal~\cite{LiCai}.

\begin{theorem}\label{theorem-intro2}
Let $\rho_* \colon C_*(\Perm^{m-1}) \to C_*(I^{m-1})$ be the homomorphism of cellular chain complexes induced by the projection from the permutohedron to the cube.
Then for each chain $F \in C_*(\Perm^{m-1})$ corresponding to a face of the permutohedron, we have
\[
  (\rho_* \otimes \rho_*)\Delta_{SU}(F) = \Delta_{C}(\rho_* F),
\]  
where $\Delta_{C}$ is the diagonal on $C_*(I^{m-1})$ dual to the product described in~\cite{LiCai}.
\end{theorem}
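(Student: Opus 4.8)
The plan is to reduce the identity to a purely combinatorial statement about faces of the permutohedron and then verify it by induction on dimension, exploiting the multiplicativity of all three maps involved. First I would recall the explicit combinatorial descriptions of the two diagonals. The Saneblidze--Umble diagonal $\Delta_{SU}$ on $C_*(\Perm^{m-1})$ is a sum over certain pairs of faces $(P, Q)$ of the permutohedron with $\dim P + \dim Q = \dim F$ that are "complementary" in the sense prescribed in~\cite{SU}; each face of $\Perm^{m-1}$ is indexed by an ordered partition of $[m]$, and the pairs appearing in $\Delta_{SU}(F)$ are governed by the shuffle-type combinatorics of these ordered partitions. The Li Cai diagonal $\Delta_{LC}$ on $C_*(I^{m-1})$ is, by contrast, the classical Serre-type diagonal on the cube expressed in the basis of faces indexed by words in $\{0, 1, *\}$: on a single interval it is $\Delta(*) = 0 \otimes * + * \otimes 1$ (or the mirror convention), extended multiplicatively over the product $I^{m-1} = I \times \cdots \times I$. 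The projection $\rho$ sends the face of $\Perm^{m-1}$ indexed by an ordered partition to the face of the cube obtained by recording, for each of the $m-1$ "walls" separating consecutive coordinates, whether the two coordinates lie in the same block (giving $*$) or in different blocks, and in the latter case which one comes first (giving $0$ or $1$); a face collapses to lower dimension precisely when $\rho_*$ sends it to a degenerate (lower-dimensional) cell, and we must track these collapses carefully.

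The key structural input is that both $\Delta_{SU}$ and $\rho$ are compatible with the product decompositions that exhaust low-dimensional faces: a face $F$ of $\Perm^{m-1}$ indexed by an ordered partition with more than one block is itself a product of smaller permutohedra (a face of a permutohedron corresponding to the ordered partition $A_1 | \cdots | A_k$ is combinatorially $\Perm^{|A_1|-1} \times \cdots \times \Perm^{|A_k|-1}$), and under $\rho$ this product structure maps to the corresponding product of sub-cubes, with one extra $0$ or $1$ coordinate inserted at each wall between consecutive blocks. Since $\Delta_{SU}$ restricted to such a product face is (up to signs and the Eilenberg--Zilber shuffle) the tensor product of the Saneblidze--Umble diagonals on the factors, and $\Delta_{LC}$ on the corresponding product of cubes is likewise the tensor product of the cube diagonals together with the trivial diagonal $\Delta(0) = 0 \otimes 0$, $\Delta(1) = 1 \otimes 1$ on the inserted coordinates, the general case follows from the case of the top-dimensional face $F = \Perm^{m-1}$ itself by naturality and multiplicativity. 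So the whole theorem reduces to checking
\[
  (\rho_* \otimes \rho_*)\,\Delta_{SU}(\Perm^{m-1}) = \Delta_{LC}\bigl(\rho_*\,\Perm^{m-1}\bigr) = \Delta_{LC}(*\cdots*),
\]
the fundamental class of the cube, for each $m$.

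For this remaining identity I would argue by induction on $m$, using the recursive formula for $\Delta_{SU}$ from~\cite{SU} (which expresses $\Delta_{SU}(\Perm^{m-1})$ in terms of diagonals of facets, since $\Perm^{m-1}$ has a well-understood facet structure indexed by proper subsets of $[m]$). Applying $\rho_* \otimes \rho_*$ to each term, one finds that many terms of $\Delta_{SU}(\Perm^{m-1})$ are killed because $\rho_*$ sends the corresponding face to a degenerate cell of the cube, and the surviving terms must be matched bijectively, with correct signs, against the $2^{m-1}$ terms of the full cube diagonal $\Delta_{LC}(*\cdots*) = \bigotimes_{i=1}^{m-1}(0 \otimes * + * \otimes 1)$. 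I expect this sign-and-bijection bookkeeping to be the main obstacle: one must set up an explicit pairing between the combinatorial data indexing the non-degenerate terms of $(\rho_*\otimes\rho_*)\Delta_{SU}$ (roughly, ways of cutting an ordered partition into a "left part" and a "right part" at each wall) and the $\{$left$=0$, right$=*\}$ / $\{$left$=*$, right$=1\}$ choices at each of the $m-1$ walls, and then check that the orientation conventions for faces of $\Perm^{m-1}$ chosen in~\cite{SU} push forward under $\rho_*$ to the standard orientations of cube faces — any discrepancy here would force a global sign correction in the statement. A clean way to organize the verification is to observe that the cube $I^{m-1}$ is itself a face of a larger cell structure and to phrase the matching in terms of the ``$AW$-type'' combinatorics, but at the level of detail appropriate here I would simply carry out the induction and the sign check term by term.
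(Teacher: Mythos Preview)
Your reduction to the top cell is correct and matches the paper: both diagonals and the projection $\rho$ are multiplicative with respect to the product decomposition of a proper face $F(U_1|\cdots|U_p)\cong\prod_j\Perm^{|U_j|-1}$, so by the comultiplicative rule~\eqref{comulrule} it suffices to treat $F=F(1,2,\ldots,m)$.

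The gap is in your treatment of the top cell. An induction on $m$ cannot close by itself: both $(\rho_*\otimes\rho_*)\Delta_{SU}$ and $\Delta_{LC}\circ\rho_*$ are chain maps, and knowing they agree on all proper faces (i.e.\ on $\partial\Perm^{m-1}$) tells you only that their values on the top cell differ by a cycle in $C_*(I^{m-1})\otimes C_*(I^{m-1})$, not that they coincide. There is no ``recursive formula for $\Delta_{SU}$ in terms of diagonals of facets'' in~\cite{SU} that would determine $\Delta_{SU}(\Perm^{m-1})$ from lower-dimensional data; the definition there is the closed formula~\eqref{SU-diag} as a sum over configuration matrices. So you are forced into a direct computation on the top cell, and for that you need exactly the combinatorial step you gestured at but did not supply: which configuration matrices $A$ have the property that \emph{both} $F(c(A))$ and $F(r(A))$ keep their dimension under~$\rho$?

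The paper answers this with a concrete lemma (Lemma~\ref{snake}): since $\rho$ preserves the dimension of $F(U_1|\cdots|U_p)$ precisely when every block $U_j$ is an interval of consecutive integers, the condition on $A$ is that every row and every column consists of consecutive integers, and this forces all nonzero entries of $A$ to form a single ``continuous snake'' with nodes $1=i_0<i_1<\cdots<i_r=m$. One then checks that every such snake matrix is actually a configuration matrix (by exhibiting the step matrix it is derived from), so the surviving terms of $(\rho_*\otimes\rho_*)\Delta_{SU}(F(1,\ldots,m))$ are in bijection with snakes, hence with subsets $\sigma=\{i_1,\ldots,i_2-1,i_3,\ldots\}\subset[m-1]$, and $\rho_*F(c(A))\otimes\rho_*F(r(A))=u_\sigma\underline t_{[m-1]\setminus\sigma}\otimes u_{[m-1]\setminus\sigma}t_\sigma$, which is exactly the $\sigma$-term of~\eqref{DeltaLC}. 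The sign check is then a direct (if tedious) computation with $\mathop{\mathit{csgn}}(A)$. Your ``left part / right part at each wall'' picture is morally this bijection, but without the snake characterisation you have no mechanism to show that the surviving configuration matrices are \emph{precisely} the $2^{m-1}$ you want and no others.
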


The work is organized as follows. In Section 2 we review the main combinatorial objects related to diagonal arrangements. Section~3 provides the definition of a real moment-angle complex and the dga-model of its cohomology ring constructed in~\cite{LiCai}.
In Section~4 we introduce the complex $\Perm(\K)$ and prove that it is homotopy equivalent to the complement of a diagonal subspace arrangement.
In Section~5 we prove Theorem~\ref{theorem-intro1} by constructing an isomorphism of the cellular cochain complex $C^*(\Perm(\K); \kk)$ and the graded component of the bar construction of the Stanley--Reisner ring $\Lambda[\K]$.
Then in Section~6 we describe the diagonal constructed in~\cite{SU}.
In Section~7 we prove Theorem~\ref{theorem-intro2} and show that the image of the complex $\Perm(\K)$ under the projection $\rho$ is a real moment-angle complex.

The authors are grateful to the anonymous referee for useful comments and suggestions.

\section{Subspace arrangements}
{\it An arrangement} is a finite set $\mathcal{A} = \{L_1, \ldots, L_r \}$ of affine subspaces in some affine space (either real or complex).

An arrangement $\mathcal{A} = \{L_1, \ldots, L_r \}$ is called \textit{coordinate}
if every subspace $L_i$, $i = 1, \ldots, r$, is a coordinate subspace.
Every coordinate subspace in $\RR^m$ can be written as
\[
  C_I = \{(x_1, \ldots, x_m) \in \RR^m \colon x_{i_1} = \cdots = x_{i_k} = 0 \}, 
\]  
where $I = \{i_1, \ldots, i_k \}$ is a subset of $[m] = \{1, 2, \ldots, m \}$.

A \textit{simplicial complex} on a set $V$ is a collection $\K$ of subsets $\sigma \subset V$, called \emph{simplices}, 
that satisfies the condition: if $\sigma \in \K$ and $\tau \subset \sigma$ then $\tau \in \K$.

One-element simplices $\{v\} \in \K$ are called \textit{vertices} of $\K$. If all one-element subsets $\{v\} \subset V$ are vertices of $\K$,
we say that $\K$ is a simplicial complex on the \emph{vertex set} $V$. Usually, $V$ is the set $[m] = \{1, 2, \ldots, m \}$.

For a simplicial complex $\K$ on the set $[m]$, we define the corresponding \textit{real coordinate arrangement} $\mathcal{CA}(\K)$ and its \emph{complement}
$U_{\RR}(\K)$:
\[
  \mathcal{CA}(\K) = \{C_I \colon I \notin \K \},\qquad
  U_{\RR}(\K) = \RR^m \backslash \bigcup_{I \notin \K} C_I.
\]

\begin{proposition}[\cite{BP00}, Proposition~5.2.2]
The assignment $\K \mapsto U_\RR(\K)$ defines a one-to-one order-preserving correspondence between the set
of simplicial complexes on $[m]$ and the set of complements of coordinate arrangement in~$\RR^m$.
\end{proposition}

For each subset $I = \{ i_1, \ldots, i_k\} \subset [m]$, we define the \textit{diagonal subspace} $D_I$ in $\RR^m$ by
\[
 D_I = \{(x_1, \ldots, x_m) \in \RR^m \colon x_{i_1} = \cdots = x_{i_k} \}. 
\] 
An arrangement $\mathcal{A} = \{L_1, \ldots, L_r \}$ is called \emph{diagonal} if every subspace $L_i$, $i = 1, \ldots, r$,
is a diagonal subspace.

Given a simplicial complex $\K$ on the vertex set $[m]$, define the \textit{real diagonal arrangement} $\mathcal{DA}(\K)$
as the collection of subspaces $D_I$ such that $I$ is not a simplex in $\K$, and consider its \emph{complement} $D_\RR(\K)$:
\[
  \mathcal{DA}(\K) = \{D_I \colon I \notin \K \},\qquad
  D_\RR(\K)=\RR^m \backslash \bigcup_{I \notin \K} D_I.
\]

Complex diagonal subspaces $D_I^\CC \subset \CC^m$, diagonal arrangements $\mathcal{DA}_\CC(\K)$, and their complements $D_\CC(\K)$
are defined similarly.

\begin{proposition}[\cite{BP00}, Proposition~5.3.2]
The assignment $\K \mapsto D_\RR(\K)$ defines a one-to-one order-preserving correspondence between the set 
of simplicial complexes on the vertex set $[m]$ and the set of diagonal arrangement complements in $\RR^m$.
\end{proposition}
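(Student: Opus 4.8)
The plan is to reduce the proposition to a purely combinatorial bijection between simplicial complexes on $[m]$ and suitable families of subsets of $[m]$, the only genuinely geometric ingredient being the construction of a point of a diagonal subspace $D_I$ that avoids every diagonal subspace in a prescribed family.

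I would first record two elementary facts: $D_I \subseteq D_J$ whenever $J \subseteq I$, and $D_I = \RR^m$ precisely when $|I| \le 1$. It follows that for any finite family $\mathcal{S}$ of subsets of $[m]$ one has $\bigcup_{I \in \mathcal{S}} D_I = \bigcup_{J \in \overline{\mathcal{S}}} D_J$, where $\overline{\mathcal{S}}$ is the up-set generated by $\mathcal{S}$ in the Boolean lattice $2^{[m]}$. Hence, after discarding the degenerate case in which some subspace of the arrangement is all of $\RR^m$ (the complement is then empty and is not of the form $D_\RR(\K)$ for any $\K$ on $[m]$), every diagonal arrangement complement has the form $\RR^m \setminus \bigcup_{J \in \mathcal{F}} D_J$ for an up-set $\mathcal{F}$ consisting of subsets of cardinality at least $2$. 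Since $\K \mapsto 2^{[m]} \setminus \K$ is an order-reversing bijection between simplicial complexes on $[m]$ and such up-sets (the associated up-set being the set of non-simplices of $\K$), it remains to prove that $\mathcal{F} \mapsto \bigcup_{J \in \mathcal{F}} D_J$ is an order-embedding, i.e. that $\mathcal{F}_1 \subseteq \mathcal{F}_2$ if and only if $\bigcup_{J \in \mathcal{F}_1} D_J \subseteq \bigcup_{J \in \mathcal{F}_2} D_J$.

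Both directions follow from one lemma, which I regard as the heart of the matter: for an up-set $\mathcal{F}$ of subsets of $[m]$ of cardinality at least $2$ and any $I$ with $|I| \ge 2$, one has $D_I \subseteq \bigcup_{J \in \mathcal{F}} D_J$ if and only if $I \in \mathcal{F}$. The implication from right to left is immediate. For the converse I would show that if $I \notin \mathcal{F}$ then the point $x$ with $x_i = 0$ for $i \in I$ and with the remaining coordinates equal to pairwise distinct positive reals lies in $D_I$ but in no $D_J$ with $J \in \mathcal{F}$: since $\mathcal{F}$ is an up-set and $I \notin \mathcal{F}$, no $J \in \mathcal{F}$ is contained in $I$, so $J$ has an element outside $I$, and then either $J$ has two such elements, at which $x$ takes distinct values, or $J$ has exactly one such element together with an element of $I$ (which exists because $|J| \ge 2$) at which $x$ vanishes; in both cases the coordinates of $x$ indexed by $J$ are not all equal, so $x \notin D_J$. (When $|I| \le 1$ the equivalence is vacuous, as $D_I = \RR^m$ is contained in no finite union of proper subspaces.) Granting the lemma, each $\mathcal{F}$ as above is recovered from $U := \bigcup_{J \in \mathcal{F}} D_J$ as $\{ I : |I| \ge 2,\ D_I \subseteq U \}$, so $\mathcal{F} \mapsto U$ is injective, and more generally $\mathcal{F}_1 \subseteq \mathcal{F}_2$ is equivalent to $\bigcup_{J \in \mathcal{F}_1} D_J \subseteq \bigcup_{J \in \mathcal{F}_2} D_J$ (the forward implication obvious, the reverse obtained by applying the lemma to each $I \in \mathcal{F}_1$). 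Passing to complements in $\RR^m$ and composing with the order-reversing bijection $\K \leftrightarrow 2^{[m]} \setminus \K$ then simultaneously yields that $\K \mapsto D_\RR(\K)$ is a bijection onto the set of diagonal arrangement complements and that $\K_1 \subseteq \K_2$ if and only if $D_\RR(\K_1) \subseteq D_\RR(\K_2)$. The only step that demands real work is the separating-point construction above; the rest is routine manipulation in the Boolean lattice, the sole subtlety being the exclusion of the empty complement.
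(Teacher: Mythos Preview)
The paper does not supply its own proof of this proposition; it is quoted from \cite{BP00} without argument. So there is nothing in the present paper to compare your attempt against, and the question reduces to whether your proof stands on its own.

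It does. The reduction to up-sets of non-simplices is exactly right, the key lemma (that $D_I \subseteq \bigcup_{J \in \mathcal{F}} D_J$ forces $I \in \mathcal{F}$ when $\mathcal{F}$ is an up-set of sets of size $\ge 2$) is correctly isolated as the only non-formal step, and your separating-point construction proves it cleanly: giving the $I$-coordinates a common value and the remaining coordinates pairwise distinct values different from that one is precisely what is needed, and your case split on whether $J$ meets $[m]\setminus I$ in one or in at least two points is sound. The order-preservation in both directions and the bijection then follow formally, as you say. Your attention to the degenerate case (an arrangement containing $D_I$ with $|I|\le 1$, hence with empty complement) is a genuine point: such a complement is not of the form $D_\RR(\K)$ for any $\K$ on the vertex set $[m]$, so the statement implicitly excludes it, and you are right to flag this rather than sweep it under the rug.
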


\section{Real moment-angle complexes and the Cai diagonal}
The following class of topological spaces, called moment-angle complexes, is useful for studying the complements of coordinate arrangements.

The \textit{real moment-angle complex} corresponding to a simplicial complex $\K$ on the set $[m]$ is a subcomplex in the cube $I^m = [-1, 1]^m$ 
of the form
$$\R_\K = \bigcup_{\sigma \in \K} (D^1, S^0)^\sigma = \bigcup_{\sigma \in \K}\left({\prod_{i \in \sigma}D^1 \times \prod_{i \notin \sigma} S^0}\right),$$
where $(D^1, S^0)$ is a pair consisting of the interval $[-1, 1]$ and its boundary $\{-1, 1 \}$.

\begin{theorem}[\cite{BP00}, Theorem~5.2.5]
There is a deformation retraction
$$U_\RR(\K) \stackrel{\simeq}\to \R_\K.$$
\end{theorem}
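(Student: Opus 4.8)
The plan is to factor the desired retraction through the cube, writing it as a composite of two elementary deformations. First I record the combinatorial pictures. For $x\in\RR^m$ write $Z(x)=\{i:x_i=0\}$; then $x\in U_\RR(\K)$ if and only if $Z(x)\in\K$, equivalently (writing $I_1,\dots,I_p\subset[m]$ for the minimal subsets not belonging to $\K$) if and only if for every $j$ some coordinate $x_i$ with $i\in I_j$ is nonzero. Dually, a point $x$ of the cube $I^m=[-1,1]^m$ lies in $\R_\K$ if and only if $\{i:|x_i|<1\}\in\K$, i.e.\ if and only if $\max_{i\in I_j}|x_i|=1$ for every $j$. In particular $\R_\K\subseteq V:=U_\RR(\K)\cap I^m$.

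The first deformation retracts $U_\RR(\K)$ onto $V$: cap each coordinate radially into $[-1,1]$, using the straight-line homotopy from $x_i$ to $x_i/\max(1,|x_i|)$. This never changes the sign of a coordinate, hence preserves $Z(x)$ and stays inside $U_\RR(\K)$; it is the identity on $I^m\supseteq V$; and at time $1$ it lands in $I^m$ without changing the zero set, hence in $V$. The second deformation retracts $V$ onto $\R_\K$. For $x\in V$ set
\[
  \mu_i(x)=\max\Bigl(\{1\}\cup\bigl\{\tfrac{1}{\max_{k\in I_j}|x_k|}\ :\ i\in I_j\bigr\}\Bigr),\qquad i\in[m],
\]
and $g(x)=(\mu_1(x)x_1,\dots,\mu_m(x)x_m)$. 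Since $Z(x)\in\K$ contains no $I_j$, each $\max_{k\in I_j}|x_k|$ is positive on $V$, so the $\mu_i$ are continuous on $V$ and $\mu_i(x)\ge1$. For $i\in I_j$ one has $|x_i|\le\max_{k\in I_j}|x_k|$, hence $\mu_i(x)|x_i|\le1$ and $g(x)\in I^m$; choosing $i\in I_j$ realizing the maximum gives $\mu_i(x)|x_i|=1$, so $g(x)\in\R_\K$. If $x\in\R_\K$ then every $\mu_i(x)=1$, so $g(x)=x$. The straight-line homotopy $x\mapsto\bigl((1-t+t\mu_i(x))x_i\bigr)_i$ has coordinate factors $1-t+t\mu_i(x)\ge1$, so it preserves signs, never leaves $V$, fixes $\R_\K$, and reaches $g$ at $t=1$. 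Composing the two deformations gives the (strong) deformation retraction of the theorem.

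The step I expect to be the real obstacle is not any of these verifications but seeing that one cannot proceed coordinatewise, although both spaces are polyhedral products. There is no continuous retraction of the pair $(\RR,\RR\setminus\{0\})$ onto $(D^1,S^0)$, since a retraction $\RR\to[-1,1]$ is the identity on $[-1,1]$ and hence sends points of $[-1,1]$ near $0$ to points near $0$, so it cannot send a punctured neighbourhood of $0$ into $\{-1,1\}$. Consequently the naive chartwise retractions of the pieces $\prod_{i\in\sigma}\RR\times\prod_{i\notin\sigma}(\RR\setminus\{0\})$ onto $\prod_{i\in\sigma}D^1\times\prod_{i\notin\sigma}S^0$ disagree on overlaps, and it is precisely the single \emph{global} rescaling $g$, with weights governed by the minimal non-faces of $\K$, that reconciles them; once one has isolated this map, everything reduces to the two inequalities $\mu_i(x)|x_i|\le1$ and $\max_{i\in I_j}\mu_i(x)|x_i|\ge1$ together with the continuity of the $\mu_i$, all immediate.
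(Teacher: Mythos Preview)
The paper does not prove this theorem; it is quoted as Theorem~5.2.5 of \cite{BP00} and used as background, so there is no in-paper argument to compare against.

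Your two-step retraction is correct. The radial cap $x_i\mapsto x_i/\max(1,|x_i|)$ preserves $Z(x)$ and hence membership in $U_\RR(\K)$, giving the deformation retraction onto $V=U_\RR(\K)\cap I^m$. For the second step, the map $g$ is well defined and continuous on $V$ since each $\max_{k\in I_j}|x_k|>0$ there; it fixes $\R_\K$; and the straight-line homotopy stays in $V$ because $(1-t)|x_i|+t\,\mu_i(x)|x_i|\le(1-t)\cdot1+t\cdot1=1$ while the positive factor $1-t+t\mu_i(x)\ge1$ preserves $Z(x)$. The landing in $\R_\K$ is exactly as you say: for each minimal non-face $I_j$, the index $i\in I_j$ with $|x_i|=\max_{k\in I_j}|x_k|$ has $\mu_i(x)|x_i|\ge1$, forcing equality. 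One small point of phrasing: in ``For $i\in I_j$ one has $|x_i|\le\max_{k\in I_j}|x_k|$, hence $\mu_i(x)|x_i|\le1$'', the bound must be applied to the particular $I_j\ni i$ realising the maximum in the definition of $\mu_i(x)$ (and for $i$ lying in no minimal non-face one uses $\mu_i(x)=1$ together with $|x_i|\le1$); with that reading the inequality is immediate. This argument is in the same spirit as the original proof in \cite{BP00}, which also retracts by a coordinatewise rescaling controlled by the combinatorics of~$\K$.
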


The space $\R_\K$ has the following cellular decomposition. The $i$th factor $[-1, 1]_i$ in the product $I^m=[-1, 1]^m$ is endowed with the natural structure of simplicial complex with vertices
$\underline{t}_i = \{-1\}_i$, $t_i = \{ 1 \}_i$ and with 1-simplex $u_i = [-1, 1]_i$.
Then each cell of the $m$-cube $I^m$ has the form
\begin{equation}\label{basis}
u_\sigma t_\tau \underline{t}_{[m]\backslash (\sigma \cup \tau)} := \prod_{i \in \sigma} u_i \times \prod_{i \in \tau} t_i \times \prod_{i \notin (\sigma \cup \tau)} \underline{t}_i,
\end{equation}
where $\sigma,\tau$ are disjoint subsets of~$[m]$. The cells of the real moment-angle complex $\R_\K\subset I^m$ are distinguished by the condition $\sigma\in\K$.

We identify cells with the corresponding cellular chains. Let us introduce the basis in $C^*(I^m)$ that is used in~\cite{LiCai} for 
the description of the product in~$H^*(\R_\K)$.
Let $\varepsilon_i := \partial u_i = t_i - \underline{t}_i$. Then the cellular chains $u_\sigma \varepsilon_\tau := u_\sigma \varepsilon_\tau \underline{t}_{[m] \backslash (\sigma \cup \tau)}$
form a basis in $C_*(I^m) = \bigotimes_{i=1}^m C_*(I)$. The dual basis consists of cochains of the form
\[
  u^\sigma t^\tau := u^\sigma t^\tau \delta^{[m] \backslash (\sigma \cup \tau)} = \bigotimes_{i \in \sigma} u_i^* \otimes \bigotimes_{i \in \tau} t_i^*\otimes \bigotimes_{i \notin \sigma \cup \tau} \delta_i^*\in
  C^{|\sigma|}(I^m),
\]  
where $u_i^*$, $t_i^*$, $\underline{t}_i^*$ are dual to $u_i$, $t_i$, $\underline{t}_i$, respectively, and $\delta_i^* = t_i^* + \underline{t}_i^*$.
The cellular differential acts on the basis cochains as
\[
  d u_i^*=0,\quad dt_i^*=u_i^*,\quad d\delta_i^*=0.
\]

The product in $C^*(\R_\K)$ is defined as follows. Since $I = [-1, 1]$ is a simplicial complex, there is the standard $\smile$-product, namely:
\begin{gather*}
  t_i^* \smile t_i^* = t_i^*, \quad t_i^* \smile u_i^* = 0, \quad u_i^* \smile t_i^* = 
  u_i^*, \quad u_i^* \smile u_i^* = 0,\\
  \delta_i^* \smile t_i^* = t_i^* \smile \delta_i^* = t_i^*, \quad \delta_i^* \smile u_i^* = u_i^* \smile \delta_i^* = u_i^*, \quad \delta_i^* \smile \delta_i^* = \delta_i^*.
\end{gather*} 
According to the Whitney formula (see~\cite{Whi}), this simplicial product extends to a product in cellular cochains $C^*(I^m)$ by the rule
\[
  u^\sigma t^\tau \smile u^{\sigma'} t^{\tau'} = (-1)^{(\sigma, \sigma')} u^{\sigma \cup \sigma'} t^{\tau \cup (\tau' \setminus \sigma)}
\]
if $\sigma \cap \sigma' = \varnothing$ and $\tau\cap\sigma'=\varnothing$ (otherwise the product is zero), where
\[
  (\sigma, \sigma') = |\{(i, j) \colon i \in\sigma, \; j \in \sigma', \; i > j \}|.
\]  

The product in $C^*(\R_\K)$ is obtained by restricting the product in $C^*(I^m)$ defined above to $\R_\K\subset I^m$.

\begin{theorem}[{\cite[Theorem~5.1]{LiCai}}]
There is an isomorphism of rings
\[ 
  H(C^*(\R_\K),d)\cong H^*(\R_\K).
\]
\end{theorem}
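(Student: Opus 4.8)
The plan is to present $\bigl(C^*(\R_\K),d,\smile\bigr)$ as the cochain algebra attached to an explicit cellular diagonal approximation of $\R_\K$, and then to invoke the classical fact that, for any CW complex, the cellular cochains equipped with the cup product coming from a cellular diagonal approximation compute the cohomology \emph{ring}. For a single factor this is transparent: the interval $I=[-1,1]$ with vertices $\underline t,t$ and edge $u$ carries the Alexander--Whitney diagonal $\Delta_I\colon C_*(I)\to C_*(I)\otimes C_*(I)$ given by $\Delta_I(\underline t)=\underline t\otimes\underline t$, $\Delta_I(t)=t\otimes t$, $\Delta_I(u)=u\otimes t+\underline t\otimes u$; it is a coassociative chain map lifting the topological diagonal, its $\kk$-dual is exactly the simplicial $\smile$-product written out above, and so $H(C^*(I),d)$ is $H^*(I)$ as a ring.

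For the cube $I^m$ I would use that a cell of a product CW complex is a product of cells of the factors, so $C_*(I^m)=C_*(I)^{\otimes m}$ with the product differential dual to $du^*=0$, $dt^*=u^*$, $d\underline t^*=0$. A cellular diagonal approximation of $I^m$ is obtained by composing $\Delta_I^{\otimes m}$ with the canonical isomorphism that regroups $\bigl(C_*(I)\otimes C_*(I)\bigr)^{\otimes m}$ as $C_*(I)^{\otimes m}\otimes C_*(I)^{\otimes m}=C_*(I^m)\otimes C_*(I^m)$; it is again coassociative and lifts the topological diagonal. The cup product dual to this map is the Whitney formula recorded in the excerpt: the conditions $\sigma\cap\sigma'=\varnothing$ and $\tau\cap\sigma'=\varnothing$ single out the summands of the diagonal that survive the pairing, and the sign $(-1)^{(\sigma,\sigma')}$ is precisely the Koszul sign produced by the regrouping isomorphism when the ``second'' tensor factors are moved past the ``first'' ones. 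Matching these two descriptions, with all signs, is Whitney's theorem~\cite{Whi}; granting it, $H(C^*(I^m),d)\cong H^*(I^m)$ as rings because a cellular diagonal approximation always induces the cup product.

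It remains to descend to $\R_\K$. The inclusion $\R_\K\hookrightarrow I^m$ is cellular, and the diagonal just constructed carries $\R_\K$ into $\R_\K\times\R_\K$: the diagonal of a basis chain $u_\sigma t_\tau\underline t_{[m]\setminus(\sigma\cup\tau)}$ with $\sigma\in\K$ is a sum of tensor products of basis chains each of whose set of $u$-indices is a subset of $\sigma$, hence belongs to $\K$ (as $\K$ is closed under passing to subsets) and therefore indexes a cell of $\R_\K$. Consequently the diagonal restricts to a cellular diagonal approximation of $\R_\K$; equivalently $C^*(I^m,\R_\K)=\ker\bigl(C^*(I^m)\to C^*(\R_\K)\bigr)$ is a differential ideal and $C^*(\R_\K)$ with the restricted product is a quotient dg-algebra whose cup product comes from a cellular diagonal of $\R_\K$. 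Applying once more the fact that such a model computes the cohomology ring gives $H(C^*(\R_\K),d)\cong H^*(\R_\K)$ as rings. I expect the genuine work to be the sign analysis in the middle step---verifying that the product dual to $\Delta_I^{\otimes m}$ followed by the regrouping map is literally the Whitney formula with the stated $(-1)^{(\sigma,\sigma')}$---after which the restriction to the subcomplex and the appeal to the cellular description of the cup product are formal.
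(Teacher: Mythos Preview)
The paper does not prove this theorem at all: it is stated as a cited result, \cite[Theorem~5.1]{LiCai}, and no argument is given in the text. So there is no ``paper's own proof'' to compare against.

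That said, your outline is correct and is essentially the argument one finds in Li Cai's paper. The key points are exactly the ones you identify: the Alexander--Whitney diagonal on the $1$-simplex $I$, its tensor-power extension to $I^m$ via the K\"unneth shuffle (which is Whitney's formula with the sign $(-1)^{(\sigma,\sigma')}$), and the observation that this cellular diagonal restricts to the subcomplex $\R_\K\subset I^m$ because every tensor factor appearing in $\Delta(u_\sigma t_\tau\underline t_{[m]\setminus(\sigma\cup\tau)})$ has its $u$-support contained in~$\sigma$. The only step you flag as needing care---the sign verification---is indeed routine bookkeeping. Your remark that $C^*(I^m,\R_\K)$ is a dg-ideal is the clean way to phrase the restriction step.
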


It is proved in~\cite{franz24} that the cellular cochain algebra $C^*(\R_\K)$ with the product above is a dga-model
for $\R_\K$; i.\,e., it is quasi-isomorphic to the singular cochain algebra of~$\R_\K$ with integer coefficients.

We are interested in the dual diagonal of the cellular chain coalgebra $C_*(I^m)$. On the basis chains $u_\sigma t_\tau \underline{t}_{[m] \backslash (\sigma \cup \tau)}$ it is given by
$$\Delta_{C}(u_\sigma t_\tau \underline{t}_{[m] \backslash (\sigma \cup \tau)}) = \sum_{\sigma' \subset \sigma} (-1)^{(\sigma', \sigma \backslash \sigma')} u_{\sigma'} t_\tau \underline{t}_{[m] \backslash (\sigma' \cup \tau)} \otimes u_{\sigma \backslash \sigma'} t_{\sigma' \cup \tau} \underline{t}_{[m] \backslash (\sigma \cup \tau)}.$$
For example, it acts on the top-dimensional cell as
\begin{equation}\label{DeltaC}
  \Delta_{C}(u_{[m]} t_\varnothing \underline{t}_{\varnothing}) = \sum_{\sigma \subset [m]} (-1)^{(\sigma, [m] \backslash \sigma)} u_{\sigma} t_\varnothing \underline{t}_{[m] \backslash \sigma} \otimes u_{[m] \backslash \sigma} t_{\sigma} \underline{t}_{\varnothing}.
\end{equation}

\section{Permutohedral complex $\Perm(\K)$}
The \textit{permutohedron} is the polytope in $\RR^m$ given by
\[
  \Perm^{m-1} = \conv\{\left({ \sigma(1), \ldots, \sigma(m) } \right) \in \RR^m \colon \sigma \in S_m \}.
\]  
There is a well-known description of the face poset of $\Perm^{m-1}$ (see~\cite[Proposition~1.5.5]{BP}, \cite[Chapter~0]{Zieg}).

\begin{theorem}\label{Perm-face}
Faces of $\Perm^{m-1}$ of dimension $p$ are in one-to-one correspondence with ordered partitions of the set $[m]$ into $m - p$ nonempty parts. An inclusion of faces $G \subset F$ holds if and only if the ordered partition corresponding to $G$ can be obtained by refining the ordered partition corresponding to~$F$.
\end{theorem}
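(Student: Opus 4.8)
The plan is to identify the face poset of $\Perm^{m-1}$ with that of its exposed faces and to compute the exposed faces using the rearrangement inequality. First note that $\Perm^{m-1}$ lies in the affine hyperplane $\{x\in\RR^m:\ x_1+\cdots+x_m=\tfrac{m(m+1)}{2}\}$ with nonempty relative interior there, so $\dim\Perm^{m-1}=m-1$. Next recall the standard facts that every nonempty face of a polytope is an intersection of facets, that each facet is an exposed face, and that an intersection of exposed faces is again exposed: if $F_i=\{x:\langle c_i,x\rangle\ \text{is maximal}\}$ and $\bigcap_iF_i\neq\varnothing$, then $\bigcap_iF_i=\{x:\langle\sum_ic_i,x\rangle\ \text{is maximal}\}$. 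Hence every nonempty face of $\Perm^{m-1}$ equals $F_c:=\{x\in\Perm^{m-1}:\langle c,x\rangle=\max_{\Perm^{m-1}}\langle c,\cdot\rangle\}$ for some $c\in\RR^m$, and it suffices to describe these.

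Given $c\in\RR^m$, let $a_1>\cdots>a_k$ be its distinct coordinate values and $A_j=\{i\in[m]:c_i=a_j\}$, so that $(A_1,\ldots,A_k)$ is an ordered partition of $[m]$. By the rearrangement inequality, a vertex $(w(1),\ldots,w(m))$ of $\Perm^{m-1}$, $w\in S_m$, maximises $\langle c,\cdot\rangle$ exactly when $w$ maps $A_1$ onto the top block of values $\{m,m-1,\ldots,m-|A_1|+1\}$, $A_2$ onto the next block, and so on, the assignment inside each block being arbitrary. Therefore $F_c$ is the convex hull of these vertices, and an obvious affine isomorphism identifies it with the product of permutohedra $\Perm^{|A_1|-1}\times\cdots\times\Perm^{|A_k|-1}$ built on the blocks $A_1,\ldots,A_k$; in particular $\dim F_c=\sum_{j}(|A_j|-1)=m-k$, so that a $p$-dimensional face corresponds to an ordered partition into $m-p$ parts. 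Moreover $F_c$ depends only on the ordered partition $(A_1,\ldots,A_k)$, distinct ordered partitions have distinct vertex sets (hence give distinct faces), and every ordered partition is realised (take $c_i=-j$ for $i\in A_j$). This establishes the claimed bijection.

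For the inclusion statement, since each face is the convex hull of its vertices, $G\subset F$ holds iff every vertex of $G$ is a vertex of $F$. Writing $G=F_{(B_1,\ldots,B_l)}$ and $F=F_{(A_1,\ldots,A_k)}$ and applying the vertex description above, one checks that every permutation respecting the blocks $(B_1,\ldots,B_l)$ also respects the blocks $(A_1,\ldots,A_k)$ precisely when $(A_1,\ldots,A_k)$ arises from $(B_1,\ldots,B_l)$ by merging consecutive blocks, i.e.\ precisely when the ordered partition of $G$ refines that of $F$. One direction is immediate; for the other, if some $A_j$ either separates two indices lying in a common $B_s$ or reverses the $B$-order of two indices, one exhibits a $B$-respecting permutation that fails to respect $(A_j)$. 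The only genuinely conceptual point in the argument is the reduction in the first paragraph — that \emph{every} face of $\Perm^{m-1}$ is exposed — which the facet-intersection observation settles; the remainder is routine bookkeeping, chiefly making the affine identification $F_c\cong\Perm^{|A_1|-1}\times\cdots\times\Perm^{|A_k|-1}$ precise and verifying the two failure modes above. (Alternatively one could avoid exposedness by first matching the $2^m-2$ facets of $\Perm^{m-1}$ with the two-block ordered partitions via the valid inequalities $\sum_{i\in B}x_i\ge\binom{|B|+1}{2}$ and then intersecting facets, but the exposed-face route is shorter to state.)
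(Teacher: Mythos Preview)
Your proof is correct and follows the same core strategy as the paper's: both identify faces as optimiser sets of linear functionals and read off the ordered partition from the level sets of the functional via the rearrangement inequality. A few differences are worth flagging. First, you maximise while the paper minimises, which reverses the labelling convention: the face you call $F_{(A_1,\ldots,A_k)}$ is what the paper calls $F(A_k|\cdots|A_1)$, since in the paper's convention the first block $U_1$ receives the \emph{smallest} coordinate values (see~\eqref{nerva}). This does not affect the theorem, but the paper relies on its convention in later sections (the boundary formula in Theorem~\ref{alg-cell}, the Saneblidze--Umble diagonal), so be aware of the discrepancy. Second, your argument is actually more complete than the paper's: you justify explicitly why every face is exposed, you compute the dimension via the product identification $F_c\cong\prod_j\Perm^{|A_j|-1}$ (the paper simply asserts the dimension from the equality pattern~\eqref{covec} in the normal vector, implicitly using normal-cone/face duality), and you prove the inclusion statement, which the paper's proof does not address. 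Both routes are fine; yours is more self-contained, the paper's is terser.
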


We reproduce the proof of this fact, as some of its details will be needed in what follows.

The ordered partition of $[m]$ into nonempty subsets $U_1, \ldots, U_p$ is denoted by $(U_1 | \cdots | U_p)$.
We denote by $F(U_1 | \cdots | U_p )$ the corresponding face of $\Perm^{m-1}$. 
Also, we assume that elements in every part are in increasing ordered.

\begin{proof}[Proof of Theorem~\ref{Perm-face}]
Suppose that a linear functional $\varphi_a(\cdot) = \langle a, \cdot \rangle$
defines a supporting hyperplane of a face $F^p \subset \Perm^{m-1}$;
that is, the restriction of the function to the polytope attains its minimum on~$F$.
We define a partition $(U_1 | U_2 | \cdots | U_{m-p})$ of $[m]$
as follows. If the coordinates of $a$ are ordered as
\begin{equation}\label{covec}
a_{u_{1, 1}} = \cdots = a_{u_{1, i_1}} > a_{u_{2, 1}} = \cdots = a_{u_{2, i_2}} > \cdots > a_{u_{m-p, 1}} = \cdots = a_{u_{m-p, i_{m-p}}},
\end{equation}
then the partition consists of the sets $U_j = \{u_{j, 1}, \ldots, u_{j, i_j} \}$, $j = 1, \ldots, m-p$. 

Consider all vertices $v = (\sigma(1), \ldots, \sigma(m))$ that belong to $F$. 
Since the minimum of $\varphi_a$ on the permutohedron is attained on~$F$,
the sum $\sum_{i = 1}^m a_j v_j$ is minimal among similar sums over all $\sigma \in S_m$.
So, the coordinates of $v$ are ordered as
\begin{equation}\label{nerva}
\{ v_{u_{1, 1}}, \ldots, v_{u_{1, i_1}} \} < \{ v_{u_{2, 1}}, \ldots, v_{u_{2, i_2}} \} < \cdots < \{ v_{u_{m-p, 1}}, \ldots, v_{u_{m-p, i_{m-p}}} \},
\end{equation}
where the notation $A_1 < A_2$ means that each element of $A_1$ is less than each element of~$A_2$.

It follows that if $v$ belongs to $F$, then the corresponding permutation $\sigma$ 
can be decomposed as
\begin{equation}\label{upor}
\sigma = \tau_1 \circ \tau_2 \circ \cdots \circ \tau_{m-p},
\end{equation}
where $\tau_j$ is the bijection between $U_j = \{u_{j, 1}, \ldots, u_{j, i_j} \}$
and
$$\{i_1 + \cdots + i_{j-1} + 1, i_1 + \cdots + i_{j-1} + 2, \ldots, i_1 + \cdots + i_{j-1} + i_j  \}.$$

On the other hand, any vertex that corresponds to the permutation satisfying \eqref{upor} belongs to $F$, 
since the value of $\varphi_a$ at this vertex is the same as at~$v$.

Note that any linear functional $\varphi_b(\cdot) = \langle b, \cdot \rangle$ 
defining the same partition $(U_1 | \cdots | U_{m-p})$
attains its minimum on the same face as~$\varphi_a$.
Indeed, the function $\varphi_b$ attains its minimum on the same vertices $v$ as $\varphi_a$.

Thus, we have proved that there is a bijection between the faces $F$ of the permutohedron and the ordered partitions $(U_1 | \cdots | U_{m-p})$.
Inequalities~\eqref{covec} imply that the dimension of the face $F$ corresponding to a partition $(U_1 | \cdots | U_{m-p})$ is~$p$.
\end{proof}

\begin{proposition}\label{hyp}
Consider the hyperplane 
$$\pi = \Bigl\{ (x_1, \ldots, x_m) \in \RR^m \colon x_1 + \cdots + x_m = \frac{m(m+1)}{2} \Bigr\},$$
that contains $\Perm^{m-1}$. 
There is a deformation retraction $$D_{\RR}(\K) \stackrel{\simeq}\to D_{\RR}(\K) \cap \pi.$$
\end{proposition}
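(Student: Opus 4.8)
The plan is to exploit the invariance of every diagonal subspace under translation along the main diagonal $\mathbf{1} = (1, \ldots, 1)$. First I would observe that each $D_I = \{x \colon x_{i_1} = \cdots = x_{i_k}\}$ is carried to itself by the flow $x \mapsto x + t\mathbf{1}$, $t \in \RR$, since adding the same number to every coordinate does not affect which coordinates coincide. Hence the union $\bigcup_{I \notin \K} D_I$ is invariant under this flow, and therefore so is its complement $D_\RR(\K)$: the complement is a union of entire lines in the direction $\mathbf{1}$.

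Next I would note that $\pi$ is a hyperplane with normal vector $\mathbf{1}$, hence transverse to every such line, and each line $\{x + t\mathbf{1} \colon t \in \RR\}$ meets $\pi$ in exactly one point. Writing $c = \tfrac{m(m+1)}{2}$, I would introduce the affine projection $p \colon \RR^m \to \pi$ given by $p(x) = x - \tfrac1m\bigl(\textstyle\sum_i x_i - c\bigr)\mathbf{1}$; one checks directly that $\sum_i p(x)_i = c$, so $p(x) \in \pi$, and that $p$ restricts to the identity on $\pi$.

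Then I would write down the straight-line homotopy $H \colon \RR^m \times [0,1] \to \RR^m$, $H(x,s) = x - \tfrac{s}{m}\bigl(\sum_i x_i - c\bigr)\mathbf{1}$, which satisfies $H(\cdot,0) = \mathrm{id}$, $H(\cdot,1) = p$, and $H(x,s) = x$ for all $x \in \pi$ and all $s$. For fixed $x$ the track $s \mapsto H(x,s)$ stays on the diagonal line through $x$, so by the invariance established in the first step $H$ maps $D_\RR(\K) \times [0,1]$ into $D_\RR(\K)$. Restricting $H$ to $D_\RR(\K) \times [0,1]$ yields the required strong deformation retraction onto $D_\RR(\K) \cap \pi$.

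There is no real obstacle here; the one point deserving a word of justification is the invariance of $D_\RR(\K)$ under the diagonal flow, and that is immediate from the defining equations of the $D_I$. The same linear projection shows that $D_\RR(\K)$ deformation retracts onto its intersection with \emph{any} hyperplane transverse to $\mathbf{1}$; the particular value $\tfrac{m(m+1)}{2}$ is chosen only so that $\pi$ is the affine hyperplane spanned by $\Perm^{m-1}$, which is what is needed in the later sections.
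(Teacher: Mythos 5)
Your proposal is correct and is essentially the paper's own argument: the homotopy $H(x,s)=x-\tfrac{s}{m}\bigl(\sum_i x_i-c\bigr)\mathbf{1}$ is exactly the map $F(x,t)$ used in the paper, and the key point in both is that translating along $\mathbf{1}$ preserves each $D_I$ and hence $D_\RR(\K)$. Nothing further is needed.
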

\begin{proof}
The map 
\[
  F(x, t) = x - t (1, \ldots, 1) \Bigl(\frac{x_1 + \cdots + x_m}{m} - \frac{m+1}{2}\Bigr)
\]
defines a homotopy between the identity map $\mathrm{id} \colon \RR^m \to \RR^m$ 
and the projection $\mathrm{pr} \colon \RR^m \to \pi$ to the hyperplane $\pi$. 
There is an inclusion $F(D_{\RR}(\K), t) \subset D_{\RR}(\K)$ for every 
$t \in [0, 1]$, since we subtract the same value from all coordinates of a point $x \in D_\RR(\K)$.
Now $F(D_{\RR}(\K), 1) = D_{\RR}(\K) \cap \pi$, so we obtain the required deformation retraction.
\end{proof}

\begin{lemma}\label{intersection}
Let $F = F(U_1 | \cdots | U_{m-p} )$ be a face of $\Perm^{m-1}$ and $I = \{i_1, \ldots, i_k \}\subset[m]$.
Then $F \cap D_I \neq \varnothing$ if and only if $I \subset U_j$ for some $1 \le j \le p$.
\end{lemma}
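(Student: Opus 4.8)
The plan is to reduce everything to the combinatorial description of faces obtained in the proof of Theorem~\ref{Perm-face}. Write $n_l=|U_l|$ and $s_l=n_1+\cdots+n_{l-1}$ (so $s_1=0$ and $s_l+n_l=s_{l+1}$). By \eqref{upor}, a vertex of $\Perm^{m-1}$ corresponding to $\sigma\in S_m$ lies in $F=F(U_1|\cdots|U_{m-p})$ exactly when $\sigma$ maps each block $U_l$ bijectively onto the interval $\{s_l+1,\ldots,s_{l+1}\}$. Since $F$ is the convex hull of precisely these vertices, every point $x\in F$ satisfies the coordinate bounds $s_l+1\le x_u\le s_{l+1}$ whenever $u\in U_l$. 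These bounds are essentially the only input needed.

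For the ``only if'' direction I would argue by contradiction. Suppose $x\in F\cap D_I$, so all coordinates $x_i$ with $i\in I$ coincide, but no single part contains $I$. Then there are $i_a\in U_j$ and $i_b\in U_{j'}$ with $j\neq j'$, say $j<j'$. The coordinate bounds give $x_{i_a}\le s_{j+1}\le s_{j'}<s_{j'}+1\le x_{i_b}$, contradicting $x_{i_a}=x_{i_b}$. Hence $I\subset U_j$ for some part~$U_j$.

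For the ``if'' direction, assume $I\subset U_j$ and exhibit an explicit point of $F\cap D_I$, namely the barycenter $b$ of the face $F$, i.e. the average of all vertices of $F$. For $u\in U_l$ the values $\sigma(u)$, taken over all permutations $\sigma$ compatible with the partition, range symmetrically over $\{s_l+1,\ldots,s_{l+1}\}$, so $b_u=s_l+\tfrac{n_l+1}{2}$ depends only on $l$. In particular $b$ is constant on each part, hence $b_i=s_j+\tfrac{n_j+1}{2}$ for every $i\in I$, that is $b\in D_I$; since also $b\in F$, we conclude $F\cap D_I\neq\varnothing$.

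The only step that is not a routine computation is the transition from the vertex description to arbitrary points of the face: one must use that $F$ is the convex hull of exactly the vertices listed in \eqref{upor}, which is what legitimises both the coordinate bounds and the evaluation of the barycenter. An alternative, perhaps cleaner, route is to invoke the identification of $F$ with the product of permutohedra $\Perm^{n_1-1}\times\cdots\times\Perm^{n_{m-p}-1}$ living on the blocks of coordinates with pairwise disjoint value ranges; under this identification the disjointness of the ranges gives the ``only if'' direction at once, and the fact that the barycenter of a product is the product of barycenters gives the ``if'' direction at once.
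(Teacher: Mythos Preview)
Your proof is correct and follows essentially the same approach as the paper's: both arguments derive coordinate bounds from the convex-hull description of $F$ for the ``only if'' direction, and both exhibit the barycenter of $F$ as an explicit point of $F\cap D_I$ for the ``if'' direction. Your version is somewhat more explicit (you compute the actual numerical bounds $s_l+1\le x_u\le s_{l+1}$ and the barycenter coordinates $s_l+\tfrac{n_l+1}{2}$, whereas the paper simply invokes inequalities~\eqref{nerva} and decomposition~\eqref{upor}), but the underlying ideas are identical.
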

\begin{proof}
First we prove the necessity. Suppose that $x \in F \cap D_I$.
Then $x$ is a convex combination of the vertices of $F$, namely:
$x = \sum_{j = 1}^S t_j v_j$, where $v_1, \ldots, v_S$ are all vertices of $F$, $t_j \ge 0$ and $\sum_{j = 1}^S t_j = 1$.
Since each vertex $v_j$ satisfies inequalities \eqref{nerva} and $t_j \ge 0$, it follows that $x$ also satisfies inequalities \eqref{nerva}.  
Therefore, $I$ is contained in some $U_j$.	

Now we prove the sufficiency. Let
\[
  b(F) = \frac1S\sum_{j = 1}^S v_j
\]  
be the barycenter of~$F$.
The decomposition~\eqref{upor} implies that the coordinates of $b(F)$ lying in the same $U_j$ are equal.
Since $I \subset U_j$, we obtain $b(F) \in D_I$, and the intersection $D_I \cap F$ is nonempty.
\end{proof}

\begin{lemma}\label{conv}
Suppose that $I = \{i_1, \ldots, i_k \}$ is a subset in $[m]$. Then 
\[
  D_I \cap \Perm^{m-1} = \conv\{b(F) \in \Perm^{m-1} \colon F \cap D_I \neq \varnothing \}.
\]  
\end{lemma}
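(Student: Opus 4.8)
The plan is to prove the two inclusions separately, the inclusion $\supseteq$ being the easy one. If $F$ is a face of $\Perm^{m-1}$ with $F\cap D_I\neq\varnothing$, then by Lemma~\ref{intersection} we have $I\subset U_j$ for some part $U_j$ of the ordered partition defining $F$; the decomposition~\eqref{upor} then shows, exactly as in the sufficiency argument of that lemma, that the coordinates of $b(F)$ indexed by the elements of $I$ all agree, so $b(F)\in D_I$, while of course $b(F)\in F\subset\Perm^{m-1}$. Since $D_I\cap\Perm^{m-1}$ is convex, it therefore contains the convex hull of all such barycentres.

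For the reverse inclusion the key observation is that $P_I:=D_I\cap\Perm^{m-1}$ is itself a polytope — a bounded intersection of the finitely many closed half-spaces cutting out $\Perm^{m-1}$ with the affine subspace $D_I$ — so $P_I$ is the convex hull of its vertices, and it is enough to show that every vertex $w$ of $P_I$ equals $b(F)$ for some face $F$ with $F\cap D_I\neq\varnothing$. We may assume $|I|\ge 2$, since if $|I|\le 1$ then $D_I=\RR^m$ and $P_I=\Perm^{m-1}$, each of whose vertices is the barycentre of a $0$-dimensional face (trivially meeting $D_I$).

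So let $w$ be a vertex of $P_I$ and let $F=F(U_1 | \cdots | U_{m-p})$ be its carrier face, i.e.\ the unique face of $\Perm^{m-1}$ with $w\in\relint F$; then $w\in F\cap D_I$, so $F\cap D_I\neq\varnothing$ and $I\subset U_j$ for some $j$. I would next argue that, since $w\in\relint F$, any direction $d$ in the linear span $\mathrm{lin}(F)$ keeps $w\pm\varepsilon d$ inside $\Perm^{m-1}$ for small $\varepsilon$, and if in addition $d\in\mathrm{lin}(D_I)=\{d : d_a=d_b\text{ for all }a,b\in I\}$ then $w\pm\varepsilon d\in P_I$; as $w$ is a vertex, this forces $\mathrm{lin}(F)\cap\mathrm{lin}(D_I)=\{0\}$. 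Now by~\eqref{upor} every vertex of $F$ satisfies the $m-p$ independent equations $\sum_{i\in U_k}x_i=\mathrm{const}_k$, which must then cut out exactly the $p$-dimensional affine hull of $F$; hence $\mathrm{lin}(F)=\{d\in\RR^m : \sum_{i\in U_k}d_i=0,\ k=1,\dots,m-p\}$. A direct dimension count then gives $\dim\bigl(\mathrm{lin}(F)\cap\mathrm{lin}(D_I)\bigr)=(|U_j|-|I|)+\sum_{k\neq j}(|U_k|-1)$, so the vanishing of this number forces $U_j=I$ and every other part $U_k$ to be a singleton. For such a face, $D_I\cap F$ consists of the single point whose coordinates indexed by $I$ are all equal (the remaining coordinates being forced by the singleton parts), and this point is precisely $b(F)$; since $w\in D_I\cap F$, we conclude $w=b(F)$, which completes the argument.

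The step I expect to be the main obstacle is the middle one: extracting from ``$w$ is a vertex of $P_I$'' the clean linear condition $\mathrm{lin}(F)\cap\mathrm{lin}(D_I)=\{0\}$. One has to check with some care that moving $w$ along a direction of $\mathrm{lin}(F)$ genuinely keeps it inside $\Perm^{m-1}$ — which is exactly where $w\in\relint F$ is used — and then pin down $\mathrm{lin}(F)$ explicitly enough that the dimension count forces the partition of $F$ to be the single block $I$ together with singletons. Once that is in place, the identification of $D_I\cap F$ with $\{b(F)\}$ is routine.
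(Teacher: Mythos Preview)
Your argument is correct, but it follows a genuinely different route from the paper's proof. The paper does not identify the vertices of $P_I=D_I\cap\Perm^{m-1}$ at all; instead it argues by contradiction on the minimal-dimensional carrier face: if $x\in P_I\setminus C$ lies in a face $F$ of minimal dimension, then the ray from $b(F)\in\relint F$ through $x$ hits $\partial F$ in a point $y$ which, being in $D_I$ and in a smaller face, already lies in $C$ by minimality, whence $x\in[b(F),y]\subset\conv(\{b(F)\}\cup C)=C$. This is a short ray-shooting argument with no linear-algebraic bookkeeping.

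Your approach, by contrast, proves the sharper statement that the vertices of $P_I$ are exactly the barycentres of the faces whose partition consists of the single block $I$ together with singletons. The cost is the dimension count $\dim\bigl(\mathrm{lin}(F)\cap\mathrm{lin}(D_I)\bigr)=(|U_j|-|I|)+\sum_{k\neq j}(|U_k|-1)$ and the explicit identification of $\mathrm{lin}(F)$; both are correct, and the step you flagged as the potential obstacle (that $w\in\relint F$ combined with $w$ being a vertex of $P_I$ forces $\mathrm{lin}(F)\cap\mathrm{lin}(D_I)=\{0\}$) is standard polytope theory and goes through as you describe. So your proof is longer but yields extra structural information that the paper's proof does not give, while the paper's proof is quicker and entirely avoids computing $\mathrm{lin}(F)$.
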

\begin{proof}
We let $C = \conv\{b(F) \in \Perm^{m-1} \colon F \cap D_I \neq \varnothing \}$. 
In the proof of Lemma~\ref{intersection} we showed that $b(F)\in D_I$, so $C \subset D_I \cap \Perm^{m-1}$.

We prove the inverse inclusion by contradiction.
Consider a face $F$ of minimal dimension containing a point $x \in D_I \cap \Perm^{m-1}$ such that $x \notin C$.
Since $x \in D_I \cap \Perm^{m-1}$ and $b(F) \in D_I \cap \Perm^{m-1}$, it follows that the interval between $x$ and $b(F)$ belongs to
$D_I \cap \Perm^{m-1}$.  
The line that contains this interval intersects $\partial F$ at some point $y$ that lies in a face of smaller dimension. 
Therefore, $y \in C$. This implies $x \in \conv(\{ b(F)\} \cup C) = C$, which contradicts the choice of $x$.
\end{proof}

\begin{construction}
For each simplicial complex $\K$ on the vertex set $[m]$, we define the \textit{permutohedral complex} $\Perm(\K)$
as the following subcomplex in $\Perm^{m-1}$:
\[
  \Perm(\K) = \Perm^{m-1} \backslash \bigcup_{I \notin \K} \bigcup_{\substack{F = F(U_1 | \cdots | U_p): \\ \exists j\colon 
  I = U_j}} \relint F 
  = \bigcup_{\substack{U_1, \ldots, U_p \in \K, \\ U_1 \sqcup \cdots \sqcup U_p = [m]}} F(U_1 | \cdots | U_p).
\]  
\end{construction}

\begin{theorem}\label{perm-k}
There is a deformation retraction $D_{\RR}(\K) \stackrel{\simeq}\to \Perm(\K)$.
\end{theorem}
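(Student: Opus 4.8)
The plan is to build the required deformation retraction as a composite of three moves; we may assume $\K\neq 2^{[m]}$, the case $\K=2^{[m]}$ being trivial ($D_\RR(\K)=\RR^m$ deformation retracts onto $\Perm(\K)=\Perm^{m-1}$). First, by Proposition~\ref{hyp} it is enough to deformation retract $D_\RR(\K)\cap\pi$ onto $\Perm(\K)$. Second, I would retract $D_\RR(\K)\cap\pi$ onto $D_\RR(\K)\cap\Perm^{m-1}$ radially from the centre $c=\bigl(\tfrac{m+1}{2},\dots,\tfrac{m+1}{2}\bigr)$ of the permutohedron: every diagonal subspace $D_I$ contains $c$, so $D_\RR(\K)\cap\pi$ is invariant under all homotheties $x\mapsto c+t(x-c)$, $t>0$, of $\pi$, while $c\in D_I$ for some $I\notin\K$, whence $c\notin D_\RR(\K)$. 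Pushing each point of $D_\RR(\K)\cap\pi$ along the ray from $c$ through it until it reaches $\Perm^{m-1}$ (and fixing the points already inside) is then a deformation retraction onto $D_\RR(\K)\cap\Perm^{m-1}$: it stays inside $D_\RR(\K)\cap\pi$ by homothety-invariance and convexity, and it is continuous because the gauge of $\Perm^{m-1}$ based at $c$ is continuous on $\pi\setminus\{c\}$. Third, I would deformation retract $D_\RR(\K)\cap\Perm^{m-1}$ onto $\Perm(\K)$ inside the barycentric subdivision $\Perm'$ of $\Perm^{m-1}$; this is the substantial step.

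For the third move the key claim is that for every $I\subseteq[m]$ the set $D_I\cap\Perm^{m-1}$ is the \emph{full} subcomplex of $\Perm'$ on the vertex set $\{\,b(F):F\cap D_I\neq\varnothing\,\}$; Lemma~\ref{conv} only asserts that $D_I\cap\Perm^{m-1}$ is a convex polytope, and the point is to upgrade this to a subcomplex. One inclusion follows from Lemma~\ref{intersection} (each such $b(F)$ lies in $D_I$) and convexity of $D_I$. For the other, take $x\in D_I\cap\Perm^{m-1}$ in the relative interior of a simplex $\conv\{b(F_0),\dots,b(F_k)\}$ of $\Perm'$, with $F_0\subsetneq\dots\subsetneq F_k$, and write $x=\sum_l\lambda_l b(F_l)$ with all $\lambda_l>0$. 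For $i_a,i_b\in I$: if $i_a$ and $i_b$ lie in different parts of some $F_l$, then by the strict block inequalities~\eqref{nerva}, by the fact that along the chain the relative order of the parts containing $i_a$ and $i_b$ is the same in every $F_l$ that separates them (refinement preserves the order of parts), and since the coordinates of $b(F_l)$ indexed by a single part of $F_l$ coincide (as in the proof of Lemma~\ref{intersection}), the numbers $b(F_l)_{i_a}-b(F_l)_{i_b}$ all have one common sign and are not all zero, so $x_{i_a}\neq x_{i_b}$, contradicting $x\in D_I$. Hence $I$ is contained in a part of every $F_l$, i.e.\ $F_l\cap D_I\neq\varnothing$ for all $l$, which proves the claim.

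Granting the claim, a simplex $\conv\{b(F_0),\dots,b(F_k)\}$ of $\Perm'$ lies in $\bigcup_{I\notin\K}\bigl(D_I\cap\Perm^{m-1}\bigr)$ exactly when some $I\notin\K$ is contained in a part of every $F_l$; taking $F_0$ finest, this holds iff the partition of $F_0$ has a part outside $\K$, iff $F_0\notin\Perm(\K)$, iff every $F_l\notin\Perm(\K)$ (coarsening a partition with a non-face part keeps one). Thus $D_\RR(\K)\cap\Perm^{m-1}$ is the complement in $\Perm^{m-1}$ of the full subcomplex $L$ of $\Perm'$ on $\{\,b(F):F\notin\Perm(\K)\,\}$, and the complementary full subcomplex — on $\{\,b(F):F\in\Perm(\K)\,\}$ — is exactly the barycentric subdivision of the subcomplex $\Perm(\K)$. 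Since the complement of a full subcomplex $L$ in a finite simplicial complex deformation retracts onto the full subcomplex on the remaining vertices (in barycentric coordinates, push the weight carried by vertices of $L$ onto the other vertices; this is well defined and continuous precisely off $L$), we obtain a deformation retraction of $D_\RR(\K)\cap\Perm^{m-1}$ onto $\Perm(\K)$. Composing the three moves proves the theorem.

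I expect the main obstacle to be precisely the upgrade in the second paragraph, from ``$D_I\cap\Perm^{m-1}$ is a convex polytope'' (Lemma~\ref{conv}) to ``$D_I\cap\Perm^{m-1}$ is a full subcomplex of $\Perm'$'' — that is, controlling the carrier simplex in $\Perm'$ of an arbitrary point of $D_I\cap\Perm^{m-1}$ via~\eqref{nerva}; the first two moves and the abstract full-subcomplex retraction are routine. As a fallback avoiding the barycentric subdivision, one may instead, after the second move, list the faces $F\notin\Perm(\K)$ in non-increasing order of dimension and push them off one at a time, at each stage retracting $D_\RR(\K)\cap\overline{F}$ radially outward from $b(F)$ onto $D_\RR(\K)\cap\partial F$ — valid because the only $D_I$ meeting $\relint F$ have $I$ inside a part of $F$ and therefore pass through $b(F)$ (Lemma~\ref{intersection}) — and extending by the identity, finishing at $D_\RR(\K)\cap\Perm(\K)=\Perm(\K)$; there the difficulty shifts to the continuity of the successive extensions.
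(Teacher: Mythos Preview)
Your argument is correct. The three moves are sound: step~2 works because every $D_I$ is a linear subspace through the centre $c$, so radial scaling toward $c$ preserves the complement; the key claim in step~3 (that $D_I\cap\Perm^{m-1}$ is the \emph{full} subcomplex of the barycentric subdivision on the barycentres it contains) follows exactly as you say, since along any flag $F_0\subsetneq\cdots\subsetneq F_k$ the differences $b(F_l)_{i_a}-b(F_l)_{i_b}$ are weakly of one sign and strictly so for at least one~$l$ once $i_a,i_b$ are separated somewhere in the chain; and the identification of $\bigcup_{I\notin\K}D_I\cap\Perm^{m-1}$ with the full subcomplex $L$ on $\{b(F):F\notin\Perm(\K)\}$ is precisely the equivalence ``$F_0\notin\Perm(\K)\iff$ every $F_l\notin\Perm(\K)$'' that you establish.

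Your route, however, is genuinely different from the paper's. The paper argues by downward induction on~$\K$: starting from $\K'=\Delta^{m-1}$, it removes one maximal simplex $I$ at a time and, having already retracted $D_\RR(\K')$ onto $\Perm(\K')$, it pushes $\Perm(\K')\setminus D_I$ onto $\Perm(\K)$ by successively collapsing each top-dimensional face $F$ meeting $D_I$ radially from $b(F)$ onto $\partial F\setminus D_I$ (exactly your ``fallback'' in the final paragraph). Your main argument replaces this inductive, face-by-face construction with a single global observation in the barycentric subdivision: the bad set is a full subcomplex and the good set is the complementary full subcomplex, so the standard straight-line retraction in barycentric coordinates does everything at once. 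The payoff of your approach is a cleaner, non-inductive proof that sidesteps any bookkeeping about the order in which faces are removed and the continuity of the glued retractions; the payoff of the paper's approach is that it stays entirely inside the polytopal cell structure and never invokes the barycentric subdivision or the abstract full-subcomplex lemma, making the geometry of each individual collapse completely explicit. It is worth noting that your strengthening of Lemma~\ref{conv} --- from ``convex hull of the barycentres'' to ``full subcomplex of $\Perm'$ on those barycentres'' --- is the real content you add, and it is correct.
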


\begin{proof}
The deformation retraction $D_{\RR}(\K) \stackrel{\simeq}\to \Perm(\K)$
will be constructed by induction.
We remove simplices from $\Delta^{m-1}$ until we obtain $\K$.

The base of induction is clear: if $\K = \Delta^{m-1}$, then $D_{\RR}(\K) = \RR^m$
and by Proposition~\ref{hyp} we have the retraction $D_{\RR}(\K) \to D_{\RR}(\K) \cap \pi = \pi$ and the evident retraction $\pi \to \Perm^{m-1} = \Perm(\Delta^{m-1})$.

Now assume that $\K$ is obtained from a simplicial complex $\K'$
by removing one maximal simplex $I = \{i_1, \ldots, i_k \} \subset [m]$.
Then the permutohedral complex $\Perm(\K)$ is obtained from $\Perm(\K')$
by removing the relative interiors of faces $F = F(U_1 | \cdots | U_p)$ 
such that $I = U_j$ for some $j$.

Also we have $D_{\RR}(\K) = D_{\RR}(\K') \backslash D_I$.
By the induction hypothesis there is a deformation retraction $D_{\RR}(\K') \to \Perm(\K ')$.
By Lemma~\ref{conv}, 
\[
  D_I \cap \Perm(\K') = \conv\{b(F) \colon F \cap D_I \neq \varnothing \}.
\]  
Consider a face $F \subset \Perm(\K')$ of maximal dimension such that
$F \cap D_I \neq \varnothing$. By the induction hypothesis, $F \cap D_J = \varnothing$ for all $J \notin \K$, $J \neq I$, since all these faces have been removed in the previous step. Hence, $F \backslash \mathcal{DA}(\K) = F \backslash D_I$. 

We define a retraction $r_F \colon F \backslash D_I \to \partial F \backslash D_I$ as follows. 
For any point $x \in F \backslash D_I$ we define $r_F(x)$ to be
the intersection of the ray from $b(F)$ to $x$ with the boundary $\partial F$. 
By Lemma~\ref{conv} the interior of the interval between $b(F)$ and $r_F(x)$ is contained in $F \backslash D_I$, 
so the retraction $r_F\colon F \backslash D_I \to \partial F \backslash D_I$ is well defined and extends identically to the other points of $\Perm(\K') \backslash D_I$.
In case $m = 3$, $I = \{1, 2\}$, and $F = F(123)$, the corresponding retraction is shown in Figure~\ref{retr}.
\begin{figure}
    \centering
    \includegraphics[width=0.4\linewidth]{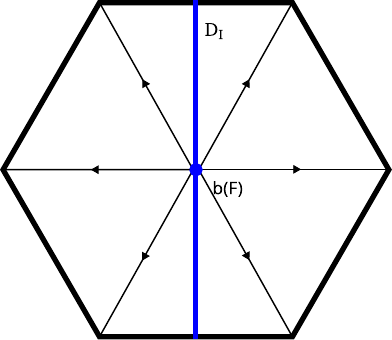}
    \caption{The retraction $r_F$.}
    \label{retr}
\end{figure}

According to Lemma~\ref{conv}, there is a well-defined convex homotopy between the identity map on $\Perm(\K') \backslash D_I$ and the retraction $r_F$.
Hence, we have constructed the deformation retraction $\Perm(\K') \backslash D_I \to (\Perm(\K') \backslash \relint F) \backslash D_I$. The deformation retraction $\Perm(\K') \backslash D_I \to \Perm(\K)$ is constructed similarly, by choosing faces of maximal dimension in $\Perm(\K') \backslash F$ that have nontrivial intersection with $D_I$.
The inductive step is therefore complete.
\end{proof}

The permutohedron and its face subcomplexes arise naturally in many problems involving torus actions. It is well known that the permutohedron is the image of the moment map of the complete flag manifold. Another example is the manifold of isospectral arrow matrices studied in~\cite{AyzB}. It was proved there that the orbit space of the torus action on this manifold is homotopy equivalent to the complex of all cubic faces of the permutohedron. This complex can be described in terms of our construction.

\begin{proposition}
Let $\K$ be the complete graph on $m$ vertices. Then $\Perm(\K)$ is the complex of all cubic faces of the permutohedron, see Figure~\ref{perm-fig}.
\end{proposition}

\begin{proof}
A cubic face of the permutohedron corresponds to a partition $(U_1 | \ldots | U_p)$ in which $|U_j| \le 2$. Since $\K$ is a complete graph, all elements of such a partition are simplices of~$\K$, hence, every cubic face lies in $\Perm(\K)$. Conversely, a face of $\Perm(\K)$ corresponds to a partition $(U_1 | \ldots | U_p)$ into simplices of~$\K$, so the cardinality of each $U_j$ is at most two.
\begin{figure}
    \centering
    \includegraphics[width=0.5\linewidth]{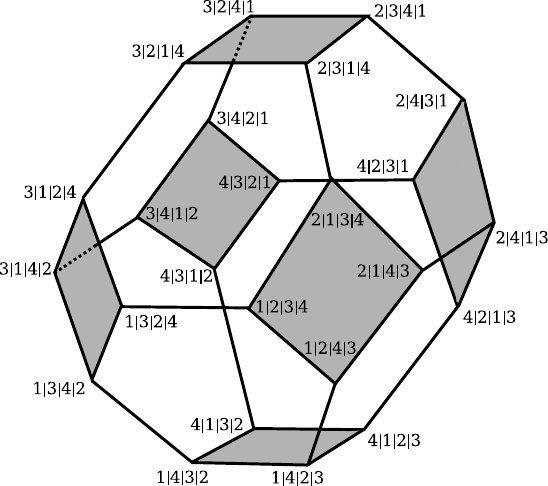}
    \caption{Complex $\Perm(\K)$ for the complete graph on four vertices.}
    \label{perm-fig}
\end{figure}
\end{proof}

We can also obtain a cellular model for the complements of
complex diagonal arrangement using permutohedral complexes. Namely, we consider the space
$\CC^m \cong \RR^{2m}$ and the standard permutohedron there.
Denote the indices $(m+1, \ldots, 2m)$ by $(1', \ldots, m')$. Then the 
faces of $\Perm^{2m-1}$ correspond to ordered partitions of the set
\[
  [m]\cup [m'] = \{1, 2, \ldots, m, 1', 2', \ldots, m'\}.
\]  
Now consider the complex
\[
  \Perm_{\CC}(\K) := \Perm^{2m-1} \backslash \bigcup_{I \notin \K} \bigcup_{\substack{(U_1 | \cdots | U_p) \\ \exists j, k:  I \subset U_j, I' \subset U_k}} \relint F(U_1 | \cdots | U_p),
\]  
where each set $I' = \{i_1', \ldots, i_s' \}$ of primed indices corresponds to the set $I = \{i_1, \ldots, i_s\}$ of the same indices without primes.

\begin{theorem}
There is a homotopy equivalence
$$D_\CC(\K) \simeq \Perm_\CC(\K).$$
\end{theorem}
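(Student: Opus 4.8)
The plan is to reproduce the proof of Theorem~\ref{perm-k} almost verbatim, after ``realifying'' the complex arrangement. Writing $z_j = x_j + \sqrt{-1}\,y_j$ and identifying $\CC^m \cong \RR^{2m}$ so that the coordinates $x_1, \ldots, x_m$ are indexed by $[m]$ and $y_1, \ldots, y_m$ by $[m']$, one has $D_I^\CC = D_I \cap D_{I'}$: the condition $z_{i_1} = \cdots = z_{i_k}$ is the simultaneous vanishing of the differences of the unprimed coordinates indexed by $I$ and of the primed coordinates indexed by $I'$. Hence $D_\CC(\K) = \RR^{2m} \setminus \bigcup_{I \notin \K}(D_I \cap D_{I'})$, and every subspace $D_I^\CC = D_I \cap D_{I'}$ is invariant under translation along the line $\RR(1, \ldots, 1) \subset \RR^{2m}$. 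I would then carry out Section~4 with $\RR^m$, $D_I$, $\Perm^{m-1}$ replaced throughout by $\RR^{2m}$, $D_I^\CC$, $\Perm^{2m-1}$.

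First I would prove the analogue of Proposition~\ref{hyp}: the homotopy $F(x,t) = x - t(1,\ldots,1)\bigl(\frac{x_1 + \cdots + x_{2m}}{2m} - \frac{2m+1}{2}\bigr)$ retracts $\RR^{2m}$ onto the hyperplane $\pi$ containing $\Perm^{2m-1}$, and since it subtracts a common scalar from all $2m$ coordinates it preserves every $D_I^\CC$, hence restricts to a deformation retraction $D_\CC(\K) \to D_\CC(\K) \cap \pi$. Next, the analogue of Lemma~\ref{intersection}: for a face $F = F(U_1 | \cdots | U_{2m-p})$ of $\Perm^{2m-1}$ and $I \subset [m]$, one has $F \cap D_I^\CC \neq \varnothing$ if and only if $I \subset U_j$ and $I' \subset U_k$ for some $j, k$. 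For necessity, a point of $F \cap D_I^\CC$ lies in $F \cap D_I$ and in $F \cap D_{I'}$, so Lemma~\ref{intersection} supplies the two inclusions; for sufficiency, the decomposition~\eqref{upor} shows that the barycentre $b(F)$ has equal coordinates within each part $U_l$, so $I \subset U_j$ forces $b(F) \in D_I$ and $I' \subset U_k$ forces $b(F) \in D_{I'}$, whence $b(F) \in D_I^\CC \cap F$ --- in particular $b(F) \in D_I^\CC$ whenever $F \cap D_I^\CC \neq \varnothing$. Finally, the analogue of Lemma~\ref{conv}: $D_I^\CC \cap \Perm^{2m-1} = \conv\{b(F) : F \cap D_I^\CC \neq \varnothing\}$, where ``$\subset$'' uses that $D_I^\CC$ is a linear subspace and ``$\supset$'' is the verbatim minimal-face contradiction argument with $D_I$ replaced by $D_I^\CC$.

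With these three lemmas in place, the induction runs exactly as for Theorem~\ref{perm-k}: obtain $\K$ from $\Delta^{m-1}$ by deleting maximal simplices one at a time; at the step $\K = \K' \setminus \{I\}$ one has $D_\CC(\K) = D_\CC(\K') \setminus D_I^\CC$, and $\Perm_\CC(\K)$ is obtained from $\Perm_\CC(\K')$ by deleting the relative interiors of the faces $F$ with $F \cap D_I^\CC \neq \varnothing$, which by the intersection lemma are precisely the faces $F(U_1 | \cdots | U_p)$ with $I \subset U_j$ and $I' \subset U_k$ for some $j,k$, i.e.\ those removed at the stage indexed by $I$ in the definition of $\Perm_\CC$. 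For a maximal-dimensional such face $F$ that is still present in $\Perm_\CC(\K')$, the intersection lemma and the inductive hypothesis give $F \setminus \mathcal{DA}_\CC(\K) = F \setminus D_I^\CC$; the radial retraction $r_F \colon F \setminus D_I^\CC \to \partial F \setminus D_I^\CC$ from $b(F)$ is well defined because $b(F) \in D_I^\CC$ and, $D_I^\CC$ being affine, any ray issuing from $b(F)$ meets $D_I^\CC$ only at $b(F)$; it extends by the identity to $\Perm_\CC(\K') \setminus D_I^\CC$, and peeling off such faces in order of decreasing dimension produces the deformation retraction $\Perm_\CC(\K') \setminus D_I^\CC \simeq \Perm_\CC(\K)$. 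Composing with $D_\CC(\K') \simeq \Perm_\CC(\K')$ completes the inductive step, and the base case $\K = \Delta^{m-1}$ is the retraction onto $\pi$ above followed by the evident retraction $\pi \to \Perm^{2m-1} = \Perm_\CC(\Delta^{m-1})$.

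I do not expect a serious obstacle here, since the argument is a transcription of Section~4; the step requiring the most care is purely combinatorial bookkeeping: verifying that $\Perm_\CC(\K)$ is a genuine subcomplex of $\Perm^{2m-1}$ --- equivalently, that the collection of deleted relative interiors is upward closed in the face poset, which holds because refining an ordered partition only shrinks its parts while $\K$ is closed under subsets --- and checking that the combinatorial condition ``$\exists j,k\colon I \subset U_j,\ I' \subset U_k$'' appearing in the definition of $\Perm_\CC(\K)$ really matches the geometric condition $F \cap D_I^\CC \neq \varnothing$ delivered by the intersection lemma.
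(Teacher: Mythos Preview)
Your proposal is correct and follows essentially the same approach as the paper: identify $D_I^\CC = D_I \cap D_{I'}$ in $\RR^{2m}$, verify via Lemma~\ref{intersection} that the faces meeting $D_I^\CC$ are exactly those with $I\subset U_j$ and $I'\subset U_k$, note that the barycentre of such a face lies in $D_I^\CC$, and then run the inductive radial-retraction argument of Theorem~\ref{perm-k} verbatim. The paper's own proof is precisely this sketch; if anything, you spell out the analogues of Proposition~\ref{hyp} and Lemmas~\ref{intersection}--\ref{conv} more explicitly than the paper does.
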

\begin{proof}
As in the proof of Theorem~\ref{perm-k}, we use induction to remove faces from $\Delta^{m-1}$ until we obtain the simplicial complex $\K$.
The difference with the real case is that we remove the relative interiors of faces $F$ such that $F \cap D^\CC_I \neq \varnothing$; that is,
we remove the relative interiors of faces $F$ that intersect both $D_I$ and $D_{I'}$. 
By Lemma~\ref{intersection} these faces $F$ correspond to partitions
$(U_1| \cdots | U_p)$ with $I \subset U_j$, $I' \subset U_k$ for some $j$ and $k$.
The barycenter of such a face is contained in the intersection $F \cap D^\CC_I$,
so we define the retraction $r_F \colon F \backslash D^\CC_I \to \partial F \backslash D^\CC_I$ similarly to the real case, by constructing rays from $b(F)$.
The remaining part of the proof is completely the same as for Theorem~\ref{perm-k}.
\end{proof}

\section{Algebraic model for cellular cochains}
Let $\kk$ be a commutative ring with unit and $A$ be a graded $\kk$-algebra with unit.
We assume that $A$ is connected, that is, $A_{<0}=0$ and $A_0 = \kk \cdot 1$, so we have the augmentation map $\varepsilon \colon A \to \kk \cong A_0$.
We denote the augmentation ideal by $I(A) = \ker \varepsilon$. 
The \textit{bar construction} of the algebra $A$ is a chain complex of the form
\[
  \cdots \to B^{-n}(A) \stackrel{d}\to B^{-n+1}(A) \to \cdots \to B^{-1}(A) \stackrel{d}\to B^0(A) \stackrel{\varepsilon}\to \kk \to 0,
\]  
where $B^{-n}(A) = A \otimes_{\kk} \underbrace{I(A) \otimes_{\kk} \cdots \otimes_{\kk} I(A)}_n$.

Elements $a \otimes a_1 \otimes \cdots \otimes a_n \in B^{-n}(A)$ are traditionally denoted by $a[a_1 | \cdots | a_n]$.
The differential is given by
$$-d(a[a_1 | \cdots | a_n]) = \overline{a} a_1 [a_2 | \cdots | a_n] + \sum_{i = 1}^{n-1} \overline{a}[\overline{a}_1 | \cdots | \overline{a}_{i-1} |
\overline{a}_i a_{i+1} | a_{i+2} | \cdots | a_n],$$
where $\overline{a}_j = (-1)^{\deg a_j + 1}a_j$. 
It is well known that the bar construction is a free resolution of the left $A$-module $\kk$ (see~\cite[Chapter~X]{Mac}).

After applying the functor $(\kk \otimes_A - )$ to the complex $(B(A), d)$, we obtain the chain complex $(\overline B (A) , \overline d )$ called
the \textit{reduced bar construction} of the ring $\kk$ as a left $A$-module. 
Elements of the reduced bar construction are $[a_1 | \cdots | a_n] \in \overline{B}^{-n}(A)$, where $a_j \in I(A)$. 
The differential $\overline d$ is defined by
\[
  -\overline d ([a_1 | \cdots | a_n]) = \sum_{i = 1}^{n-1} [\overline{a}_1 | \cdots | \overline{a}_{i-1} | \overline{a}_i a_{i+1} | a_{i+2} | \cdots | a_n].
\] 
The $(-n)$th cohomology of this complex is the $\Tor$ module:
\[
  \Tor_A^{-n}(\kk, \kk) = H^{-n}[\overline B (A), \overline d].
\]  
 
Now let $A$ be the exterior Stanley--Reisner algebra of a simplicial complex $\K$. It is the quotient algebra
\[
  \Lambda[\K] = \Lambda[x_1, \ldots, x_m] / \mathcal{I}_{SR},
\]  
where $\Lambda[x_1, \ldots, x_m]$ is the exterior algebra with $m$ generators, $\deg x_i = 1$, $i = 1, \ldots, m$, and
$\mathcal{I}_{SR} = (x_{i_1} \cdots x_{i_k}\colon \{i_1, \ldots, i_k \} \notin \K)$ 
is the Stanley--Reisner ideal.
There is a $\ZZ_{\ge 0}^m$-grading in the exterior Stanley--Reisner algebra, defined as follows. If $X = x_{i_1} \cdots x_{i_k}$,
then $\mdeg(X) = ( q_1, \ldots, q_m )$, where $q_j = 1$ if $j \in \{i_1, \ldots, i_k \}$ and $q_j = 0$ otherwise.

Consider the reduced bar construction $\overline{B}(\Lambda[\K])$. 
The basis of $\overline{B}^{-n}(\Lambda[\K])$ consists of the elements $[X_1 | \cdots | X_n]$,
where $X_i$ is a monomial in $\Lambda[\K]$, $i = 1, \ldots, n$. 
Hence, there is a $\ZZ_{\ge 0}^m$-grading in the reduced bar construction defined by $\mdeg([X_1 | \cdots | X_n]) = \mdeg(X_1) + \cdots + \mdeg(X_n)$.

We see that the differential $\overline{d}$ preserves the multigrading.
So, the reduced bar construction $(\overline{B}(\Lambda[\K]), \overline d)$ splits into a direct sum of multigraded components $(\overline{B}(\Lambda[\K]), \overline d)_{ (\alpha_1, \ldots, \alpha_m ) }$,
and the $\kk$-modules
$\Tor_{\Lambda[\K]}^*(\kk, \kk)_{(\alpha_1, \ldots, \alpha_m )}$ are defined. 
We are interested in the component $(\overline{B}(\Lambda[\K]),\overline{d})_{(1, \ldots, 1 )}$.

\begin{theorem}\label{alg-cell}
The cellular cochain complex of $\Perm(\K)$ is isomorphic to the $(1 , \ldots, 1 )$-component of the reduced bar construction of the exterior Stanley--Reisner algebra:
\[
  \bigl(C^p(\Perm(\K); \kk), d\bigr) \cong \bigl(\overline{B}^{p-m}(\Lambda[\K]),\overline{d}\bigr)_{(1, \ldots, 1 )}.
\]  
\end{theorem}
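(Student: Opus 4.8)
The plan is to set up an explicit bijection between the distinguished bases of the two complexes --- the dual cochains of the cells of $\Perm(\K)$ on one side, the standard bar monomials on the other --- and then to check that it intertwines the differentials, the only subtlety being the bookkeeping of signs.

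\emph{The bases.} By Theorem~\ref{Perm-face} and the definition of $\Perm(\K)$, its $p$-dimensional cells are the faces $F(U_1|\cdots|U_{m-p})$ with $U_1\sqcup\cdots\sqcup U_{m-p}=[m]$ and every block $U_j\in\K$, so $C^{p}(\Perm(\K);\kk)$ is $\kk$-free on their dual cochains. On the algebraic side $\overline{B}^{-n}(\Lambda[\K])$ is $\kk$-free on the classes $[X_1|\cdots|X_n]$ with each $X_j$ a nonzero monomial of $\Lambda[\K]$; writing $x_U:=\prod_{i\in U}x_i$ for the increasingly ordered monomial, such a class lies in the $(1,\ldots,1)$-component exactly when $X_j=x_{U_j}$ for an ordered partition $[m]=U_1\sqcup\cdots\sqcup U_n$, and $x_{U_j}\neq 0$ in $\Lambda[\K]$ precisely when $U_j\in\K$. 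Setting $n=m-p$, the assignment
\[
  F(U_1|\cdots|U_{m-p})^{*}\ \longmapsto\ [x_{U_1}|\cdots|x_{U_{m-p}}]
\]
(possibly twisted by a sign depending only on the partition, see below) is therefore a bijection between the distinguished bases of $C^{p}(\Perm(\K);\kk)$ and of $\overline{B}^{p-m}(\Lambda[\K])_{(1,\ldots,1)}$; in particular the degree shift $p\leftrightarrow p-m$ is forced by $\dim F(U_1|\cdots|U_n)=m-n$.

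\emph{The differentials, up to sign.} By Theorem~\ref{Perm-face} the facets of $F(U_1|\cdots|U_k)$ are the faces obtained by refining one block $U_i$ into two ordered pieces; dually, a cell $F(V_1|\cdots|V_{k+1})$ of $\Perm(\K)$ is a facet of exactly those cells $F(V_1|\cdots|V_i\cup V_{i+1}|\cdots|V_{k+1})$, $1\le i\le k$, for which $V_i\cup V_{i+1}\in\K$ (the remaining merges are not faces of $\Perm(\K)$). Consequently the cellular coboundary of the dual cochain of $F(V_1|\cdots|V_{k+1})$ is a signed sum, over those indices $i$, of the dual cochains of these merges, the coefficients being the incidence numbers of $\Perm^{m-1}$. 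On the other side $\Lambda[\K]$ carries the zero internal differential, so the reduced bar differential consists only of the merging terms, and the term with index $i$ vanishes precisely when $x_{V_i\cup V_{i+1}}=0$, i.e.\ when $V_i\cup V_{i+1}\notin\K$ --- the very same condition. Using $x_{V_i}x_{V_{i+1}}=(-1)^{r(V_i,V_{i+1})}x_{V_i\cup V_{i+1}}$ in $\Lambda[x_1,\ldots,x_m]$, where $r(A,B)=|\{(a,b):a\in A,\ b\in B,\ a>b\}|$, together with $\overline{X}_j=(-1)^{|V_j|+1}X_j$, one computes
\[
  \overline{d}\,[x_{V_1}|\cdots|x_{V_{k+1}}]=\sum_{i=1}^{k}(-1)^{\,1+\sum_{j=1}^{i}(|V_j|+1)+r(V_i,V_{i+1})}\,[x_{V_1}|\cdots|x_{V_i\cup V_{i+1}}|\cdots|x_{V_{k+1}}],
\]
terms with $V_i\cup V_{i+1}\notin\K$ being dropped. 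Thus both differentials act on the same basis elements.

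\emph{Matching the signs.} This last point is the only technical one. Using the product decomposition $F(U_1|\cdots|U_k)\cong\Perm^{|U_1|-1}\times\cdots\times\Perm^{|U_k|-1}$ one fixes orientations of the faces of $\Perm^{m-1}$ and writes down the incidence numbers $[F(V_1|\cdots|V_{k+1}):F(V_1|\cdots|V_i\cup V_{i+1}|\cdots|V_{k+1})]$ explicitly. A direct comparison --- keeping track of the position sign $i$, of the internal sign $\overline{X}_j=(-1)^{|V_j|+1}X_j$, and of the shuffle sign $(-1)^{r(V_i,V_{i+1})}$ coming from the exterior product --- shows that, after twisting the bijection of the first step by a sign which can be written down explicitly in terms of the partition, the cellular coboundary $d$ is carried to $\overline{d}$. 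This gives the asserted isomorphism $\bigl(C^p(\Perm(\K);\kk),d\bigr)\cong\bigl(\overline{B}^{p-m}(\Lambda[\K]),\overline{d}\bigr)_{(1,\ldots,1)}$. I expect the explicit computation of the permutohedral incidence numbers and this sign comparison to be the main obstacle; everything else is a direct dictionary between ordered set partitions and bar monomials.
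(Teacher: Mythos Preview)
Your approach is exactly the paper's: the same bijection $F(U_1|\cdots|U_{m-p})^*\mapsto[x_{U_1}|\cdots|x_{U_{m-p}}]$, the same observation that both differentials merge two adjacent blocks with the merge killed precisely when $U_j\cup U_{j+1}\notin\K$, and the same reduction of the problem to comparing signs.

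The one point you leave open --- the explicit incidence numbers and the sign match --- is dispatched in the paper by quoting Milgram's formula for the cellular boundary of the permutohedron, which gives the coboundary coefficient of $F(U_1|\cdots|V_j|\cdots)^*$ in $dF(U_1|\cdots|U_{m-p})^*$ as $(-1)^{m_1+\cdots+m_{j-1}+|U_j|}\,\mathit{shuff}(U_j;U_{j+1})$ with $m_i=|U_i|-1$. Since $m_i\equiv\deg X_i+1\pmod 2$ and the shuffle sign equals your $(-1)^{r(U_j,U_{j+1})}$, this already agrees with the bar sign you computed, so no extra sign twist on the bijection is needed. In other words, the ``main obstacle'' you anticipate is a one-line check once you have Milgram's formula in hand; you should cite it rather than rederive the incidence numbers from the product decomposition.
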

\begin{proof}
Consider the $\kk$-module homomorphism 
\[
  \varphi\colon C^*(\Perm(\K)) \to 
  \overline{B}(\Lambda[\K])_{(1, \ldots, 1 )},
\]
defined on the generators by
\[
  \varphi (F(U_1 | \cdots | U_{m-p})^*) = [X_1 | \cdots | X_{m-p}],
\]  
where $F(U_1 | \cdots | U_{m-p})^*$ is the cochain dual to a $p$-dimensional face $F^p = F(U_1 | \cdots | U_{m-p})$, and $X_j = \prod_{i \in U_j} x_i$.
The map $\varphi$ is an isomorphism of $\kk$-modules, since it is bijective on generators (note that $\overline{B}^{-n}(\Lambda[\K])_{(1, \ldots, 1 )} = 0$ if $n > m$).

The boundary map in the cellular chain complex of permutohedron is calculated in~\cite{Mil}. It has the form
\begin{multline*}
  \partial F(U_1 | \cdots | U_{m-p}) = \\
  = \sum_{j = 1}^{m-p} \sum_{M \subset U_j} 
  (-1)^{\varepsilon} \mathop{\mathrm{shuff}} (M; U_j \backslash M) 
  F(U_1 | \cdots | U_{j-1} |  M | U_j \backslash M | U_{j+1} | \cdots | U_{m-p}),
\end{multline*}
where $\varepsilon = {m_1 + \cdots + m_{j-1} + |M|}$, $m_i = |U_i| - 1$, and $\mathop{\mathrm{shuff}}(M; U_j \backslash M)$ is the sign of the permutation sending the first $|M|$ elements of $U_j$ to the elements of $M$,
and the other elements to $U_j \backslash M$.

After dualizing we find that the differential acts on cochains as
\[
  d F(U_1 | \cdots | U_{m-p})^* = \sum_{j = 1}^{m-p} (-1)^\varepsilon \mathop{\mathrm{shuff}}(U_j; U_{j+1}) F(U_1 | \cdots | U_{j-1} | V_j | U_{j+2} | \cdots | U_{m-p})^*,
\]  
where $V_j = U_j \cup U_{j+1}$ and we assume that the elements of this set are in increasing order. 
Applying the homomorphism $\varphi$, we obtain
\[
  \varphi (dF(U_1 | \cdots | U_{m-p})^*) = \sum_{j = 1}^{m-p} (-1)^\varepsilon \mathop{\mathrm{shuff}}(U_j; U_{j+1}) [X_1 | \cdots | (-1)^{\delta_j} X_j X_{j+1} | \cdots | X_{m-p}],
\]
where the sign $(-1)^{\delta_j}$ is defined by the identity $(-1)^{\delta_j} X_j X_{j+1} = \prod_{k \in V_j} x_k$. 
Note that $\delta_j = \mathop{\mathrm{shuff}}(U_j; U_{j+1})$ and $m_i = \deg X_i - 1$, so $\varphi( dF(U_1 | \cdots | U_{m-p})^*) = d (\varphi(F(U_1 | \cdots | U_{m-p})^*))$.
Thus, we have shown that $\varphi$ is the required isomorphism of chain complexes.
\end{proof}

\begin{corollary}
For any commutative ring $\kk$ with unit there is an isomorphism of $\kk$-modules
$$H^p(D_\RR(\K); \kk) \cong \Tor^{p-m}_{\Lambda[\K]}(\kk, \kk)_{ ( 1, \ldots, 1) }.$$
\end{corollary}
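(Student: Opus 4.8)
The plan is to assemble the corollary directly from the three structural results established above, with no new computation required. First I would invoke Theorem~\ref{perm-k}, which provides a deformation retraction $D_{\RR}(\K) \stackrel{\simeq}\to \Perm(\K)$. Since a homotopy equivalence induces an isomorphism on cohomology with coefficients in any ring, this gives $H^p(D_\RR(\K); \kk) \cong H^p(\Perm(\K); \kk)$ for every~$p$ and every commutative ring $\kk$ with unit.

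Next, because $\Perm(\K)$ is a CW subcomplex of $\Perm^{m-1}$, its singular cohomology with $\kk$-coefficients is computed by the cellular cochain complex, so $H^p(\Perm(\K); \kk) \cong H^p\bigl(C^*(\Perm(\K); \kk), d\bigr)$. Now I would apply Theorem~\ref{alg-cell}, which gives an isomorphism of cochain complexes
\[
  \bigl(C^p(\Perm(\K); \kk), d\bigr) \cong \bigl(\overline{B}^{p-m}(\Lambda[\K]),\overline{d}\bigr)_{(1, \ldots, 1)}.
\]
Passing to cohomology yields $H^p(\Perm(\K); \kk) \cong H^{p-m}\bigl(\overline{B}(\Lambda[\K]),\overline{d}\bigr)_{(1, \ldots, 1)}$.

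Finally I would identify the right-hand side with the claimed $\Tor$ module. As noted above, the differential $\overline{d}$ preserves the $\ZZ_{\ge 0}^m$-multigrading, so $(\overline{B}(\Lambda[\K]),\overline{d})$ splits as a direct sum of its multigraded components; taking cohomology commutes with this direct sum, hence $H^{-n}\bigl(\overline{B}(\Lambda[\K]),\overline{d}\bigr)_{(1, \ldots, 1)} = \Tor^{-n}_{\Lambda[\K]}(\kk, \kk)_{(1, \ldots, 1)}$ by the very definition of the multigraded $\Tor$. Substituting $n = m - p$ gives the asserted isomorphism $H^p(D_\RR(\K); \kk) \cong \Tor^{p-m}_{\Lambda[\K]}(\kk, \kk)_{(1, \ldots, 1)}$.

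There is no genuine obstacle in this argument, since every ingredient has already been proved; the only points meriting a word of care are that cellular cohomology agrees with singular cohomology for the CW complex $\Perm(\K)$ (so that the homotopy equivalence of Theorem~\ref{perm-k} may be combined with the chain-level isomorphism of Theorem~\ref{alg-cell}), and that the multigraded splitting of the reduced bar construction is compatible with passage to cohomology. Both facts are standard, and the corollary is really just a restatement of Theorem~\ref{theorem-intro1}.
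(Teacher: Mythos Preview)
Your argument is correct and is exactly the intended one: the paper states the corollary immediately after Theorem~\ref{alg-cell} with no separate proof, treating it as the evident combination of Theorem~\ref{perm-k} with Theorem~\ref{alg-cell} and the definition of the multigraded $\Tor$. Your write-up simply makes those steps explicit.
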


\begin{remark}
This isomorphism was also proved by Dobrinskaya in~\cite{Dobr09} using the CW complex with cells homeomorphic to products of permutohedra. 
However, this cell decomposition is constructed less explicitly using the theory of monoidal completions developed in~\cite{Dobr06}.
Our construction allows us to describe the product in the cohomology ring using cellular diagonal approximations in the permutohedron.
\end{remark}

\section{Saneblidze--Umble diagonal}
Since $\Lambda[\K]$ is a graded commutative algebra,
there is a natural graded commutative product in the $\kk$-module $\Tor_{\Lambda[\K]}^*(\kk, \kk)$ that arises from the bar construction (see~\cite[Chapter~X]{Mac}).
However, this product does not preserve the grading $(1, \ldots, 1 )$, 
so it cannot be used for description of the product in the cohomology algebra $H^*(\Perm(\K); \kk)$, which is additively isomorphic to $\Tor_{\Lambda[\K]}^*(\kk, \kk)_{(1, \ldots, 1 )}$. 

A product of cellular cochains $C^*(\Perm(\K))$ can be defined using a cellular diagonal approximation $\widetilde{\Delta} \colon \Perm^{m-1} \to \Perm^{m-1} \times \Perm^{m-1}$ of the standard diagonal in the permutohedron.
One of these cellular diagonal approximations is constructed in~\cite{SU}.
Here we give the definition of this approximation.

A $q \times p$ matrix $O = (o_{i,j})$ is called \textit{ordered} if the following conditions hold:
\begin{enumerate}
\item[(1)] $\{o_{i,j}\} = \{0, 1, \ldots, q+p-1 \}$;
\item[(2)] each row and column of $O$ is non-zero;
\item[(3)] non-zero entries in $O$ are distinct and increase in each row and column.
\end{enumerate}
The set of ordered $q\times p$ matrices is denoted by $\mathcal{O}^{q \times p}$.

Given an ordered matrix $O$, we consider two partitions of $[q+p-1]$. The first partiiton is $c(O) = (O_1 | \cdots | O_q)$,
where $O_j$ is the $j$th column of $O$, and the second is $r(O) = (O^p | \cdots | O^1)$, where $O^i$ is the $i$th row of $O$ (note the reverse order of rows).
Here we assume that all zero entries of $O_j$ and $O^i$ are removed.

An ordered matrix $E=(e_{i,j})$ is a \textit{step matrix} if
\begin{enumerate}
\item[(1)] the nonzero entries in each row (column) of $E$ appear in consecutive columns (respectively, rows);
\item[(2)] the main diagonal as well as each secondary diagonal of $E$ contain a single nonzero entry (i.\,e., there is a single nonzero entry among $e_{i,j}$ with fixed $j-i$).
\end{enumerate}
The set of $q \times p$ step matrices is denoted by $\mathcal{E}^{q \times p}$.

\begin{example}\label{example1}
The matrix $E = \begin{pmatrix}
0 & 2 & 3 \\
1 & 5 & 0 \\
4 & 0 & 0
\end{pmatrix}$ is a step matrix. The corresponding partitions of $\{1,2,3,4,5\}$ are $c(E) = (14| 25| 3)$ and $r(E) = (4| 15| 23)$.
\end{example}

For any $(i, j) \in \ZZ^+ \times \ZZ^+$ define the \textit{down-shift} operators $D_{i,j}\colon \mathcal{O} \to \mathcal{O}$ and the \textit{right-shift} operators
$R_{i,j} \colon \mathcal{O} \to \mathcal{O}$
on $O \in \mathcal{O}^{q \times p}$ as follows:
\begin{enumerate}
\item[1)] If $o_{i,j}>0$, $o_{i+1, j} = 0$, $o_{i+1, l} < o_{i,j}$ whenever $l < j$, $o_{i+1, l} > o_{i,j}$ whenever $l > j$ and $o_{i+1, l} \neq 0$,
and there is $o_{i, k} \neq 0$ for some $k \neq i$, then
$D_{i,j}O$ is obtained from $O$ by transposing $o_{i, j}$ and $o_{i+1, j}$.
Otherwise $D_{i,j} O = O$.

\item[2)] If $o_{i,j}>0$, $o_{i, j+1} = 0$, $o_{l, j+1} < o_{i,j}$ whenever $l < i$, $o_{l, j+1} > o_{i,j}$ whenever $l > i$ and $o_{l, j+1} \neq 0$,
and there is $o_{k, j} \neq 0$ for some $k \neq j$, then
$R_{i,j}O$ is obtained from $O$ by transposing $o_{i, j}$ and $o_{i, j+1}$.
Otherwise $R_{i,j} O = O$.
\end{enumerate}

\begin{example}\label{example2}
Let $E = \begin{pmatrix}
0 & 2 & 3 \\
1 & 5 & 0 \\
4 & 0 & 0
\end{pmatrix}$. Then 
\[
 R_{2,2}E = \begin{pmatrix} 
0 & 2 & 3\\
1 & 0 & 5\\
4 & 0 & 0
\end{pmatrix},\quad 
D_{2,2}E = \begin{pmatrix} 
0 & 2 & 3\\
1 & 0 & 0\\
4 & 5 & 0
\end{pmatrix},\quad 
D_{2,3} R_{2,2}E = \begin{pmatrix} 
0 & 2 & 3\\
1 & 0 & 0\\
4 & 0 & 5
\end{pmatrix}.
\]
\end{example}

A matrix $A \in \mathcal{O}$ is a \textit{configuration matrix} if there exists a step matrix $E$ and a sequence of shift operators 
$G_1, \ldots, G_s$ such that
\begin{enumerate}
\item[1)] $A = G_s \cdots G_1 E$,
\item[2)] if $G_s \cdots G_1 = \cdots D_{i_2, j_2} \cdots D_{i_1, j_1} \cdots$, then $i_1 \le i_2$,
\item[3)] if $G_s \cdots G_1 = \cdots R_{i_2, j_2} \cdots R_{i_1, j_1} \cdots$, then $j_1 \le j_2$.
\end{enumerate}
When this occurs, we say that $A$ is derived from $E$.
The set of $q \times p$ configuration matrices is denoted by $\mathcal{C}^{q \times p}$.

\begin{remark}
All matrices of Example~\ref{example2} are configuration matrices.
\end{remark}

For each $m$, define
\begin{equation}\label{SU-diag}
\Delta_{SU}(F(1, 2, \ldots, m)) = \sum_{q = 1}^m \sum_{A \in \mathcal{C}^{q \times (m-q+1)}} \mathop{\mathrm{csgn}}(A)\; F(c(A)) \otimes F(r(A))
\end{equation}
and extend $\Delta_{SU}$ to the other faces $F(U_1 | \cdots | U_p)$ via the standard comultiplicative extension; that is, 
\begin{equation}\label{comulrule}
  \Delta_{SU}\bigl(F(U_1| \cdots | U_p)\bigr) = 
  F\bigl(\Delta_{SU}(F(U_1)) \big| \cdots \big| \Delta_{SU}(F(U_p))\bigl).
\end{equation}
The sign $\mathop\mathrm{csgn}(A)$ is defined in the following way.
If $A \in \mathcal{C}^{q \times p}$ is derived from a step matrix $E$, then
\[
  \mathop{\mathrm{csgn}}(A) = (-1)^{\binom q2} 
  \mathop{\mathrm{rsgn}}(c(E)) \cdot \mathop{\mathrm{sgn}}\nolimits_1 r(A) \cdot \mathop{\mathrm{sgn}}\nolimits_2 c(E) \cdot \mathop{\mathrm{sgn}}\nolimits_2 c(A),
\]  
where
\begin{align*}
  \mathop{\mathrm{rsgn}}(U_1 | \cdots |U_p) &= (-1)^{\frac12(|U_1|^2 + \cdots + |U_p|^2 - m)},\\
  \mathop{\mathrm{sgn}}\nolimits_1(U_1 | \cdots | U_p) &= (-1)^{\sum_{i=1}^{p-1}i \cdot |U_{p-i}|} 
  \mathop{\mathrm{psgn}}(U_1 \cup \cdots \cup U_p),\\ 
  \mathop{\mathrm{sgn}}\nolimits_2(U_1 | \cdots | U_p) &= 
  (-1)^{{\binom{p-1}2} + \sum_{i=1}^{p-1}i \cdot |U_{p-i}|} 
  \mathop\mathrm{{psgn}}(U_1 \cup \cdots \cup U_p),
\end{align*}
$\mathop\mathrm{{psgn}}(U_1 \cup \cdots \cup U_p)$ is the sign of the permutation of $[m]$ that sends the first $|U_1|$ elements to $U_1$, the next $|U_2|$ elements to $U_2$, and so on, ending with the last $|U_p|$ elements sent to~$U_p$.

Note that the diagonal~\eqref{SU-diag} is constructed combinatorially and it is not proved in~\cite{SU} that~\eqref{SU-diag} is a cellular diagonal approximation.
However, it was later shown in~\cite{DLPS} that \eqref{SU-diag} is indeed homotopic to the standard diagonal.

\begin{example}
Let $m = 2$. Then any ordered $1\times 2$ or $2 \times 1$ matrix is a step matrix. Therefore,
$$\Delta(F(12)) = F(12) \otimes F(2|1) + F(1|2) \otimes F(12).$$
In the case $m = 3$ the formula for $\Delta(F(123))$ is obtained in~\cite{SU}: 
\begin{multline*}
  \Delta(F(123)) = F(1|2|3) \otimes F(123) + F(123) \otimes F(3|2|1) -\\ -F(1|23) \otimes F(13|2) + F(2|13) \otimes F(23|1) - F(13|2) \otimes F(3|12) +\\ 
  + F(12|3) \otimes F(2|13) - F(1|23) \otimes F(3|12) + F(12|3) \otimes F(23|1).
\end{multline*}  
\end{example}

\section{The connection between the Cai and Saneblidze--Umble diagonals}
We use the piecewise linear projection $\rho \colon \Perm^{m-1} \to I^{m-1}$ defined in~\cite{SU} to describe the connection between permutohedral and real moment-angle complexes.
Its action on the vertices of permutohedron is given by
$$\rho( F(U_1 | \cdots | U_m)) = \prod_{i \in \tau} \{ 1\} \times \prod_{i \notin \tau} \{-1\},$$
where $\tau = \{i \colon U_j=\{i+1\},\; U_k=\{i\} \text{ for some }j < k\}$.
Since this map is piecewise linear, we can recover the image of each face from the image of its vertices, namely,
\begin{equation}\label{rho-face}
\rho \left({F(U_1 | \cdots | U_p) }\right) = \prod_{i \in \sigma}D^1 \times \prod_{i \in \tau} \{ 1 \} \times \prod_{i \notin \sigma \cup \tau} \{-1\},
\end{equation}
where  $\sigma = \{i \mid \exists j : \{i, i+1 \} \subset U_j \}$, 
$\tau = \{i \mid \exists j < k:  i + 1 \in U_j, \text{ } i \in U_k\}.$

\begin{lemma}
Consider a face $F = F(U_1| \cdots | U_p)$ of $\Perm^{m-1}$.
The dimension of the face $\rho(F)$ is equal to the dimension of $F$ if and only if
each subset $U_j$ has the form
$$U_j = \{a_j, a_j+1, \ldots, a_j + |U_j| - 1 \}, \qquad a_j \in [m].$$
\end{lemma}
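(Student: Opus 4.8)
The strategy is to compute the dimension of $\rho(F)$ directly from formula~\eqref{rho-face} and compare it with $\dim F$. By Theorem~\ref{Perm-face}, if $F = F(U_1|\cdots|U_p)$ then $\dim F = m - p$. On the other hand, \eqref{rho-face} writes $\rho(F)$ as a product of $|\sigma|$ copies of $D^1$ and points in the remaining coordinates, so $\dim \rho(F) = |\sigma|$, where $\sigma = \{i \mid \exists j : \{i, i+1\} \subset U_j\}$. Thus the equality $\dim \rho(F) = \dim F$ is equivalent to the purely combinatorial identity $|\sigma| = m - p$.

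First I would rewrite $|\sigma|$ in a way that decomposes over the parts. For each part $U_j$, let $\sigma_j = \{i \mid \{i, i+1\} \subset U_j\}$; since distinct parts are disjoint, the sets $\sigma_j$ are pairwise disjoint and $\sigma = \bigsqcup_{j=1}^p \sigma_j$, so $|\sigma| = \sum_{j=1}^p |\sigma_j|$. The key elementary observation is that for a finite subset $U_j \subset [m]$, the number of indices $i$ with both $i$ and $i+1$ in $U_j$ equals $|U_j| - (\text{number of maximal runs of consecutive integers in } U_j)$. Hence $|\sigma_j| \le |U_j| - 1$, with equality if and only if $U_j$ consists of a single run, i.e. $U_j = \{a_j, a_j+1, \ldots, a_j + |U_j| - 1\}$ for some $a_j$. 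Summing over $j$ and using $\sum_j |U_j| = m$ gives $|\sigma| \le \sum_j (|U_j| - 1) = m - p$, with equality precisely when every $U_j$ is a run of consecutive integers. This establishes both directions of the "if and only if".

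The only point requiring a little care is the claim that $\rho$, restricted to the face $F$, is an \emph{injective} affine (piecewise-linear) map onto its image, so that $\dim \rho(F)$ is genuinely $|\sigma|$ and not smaller; but this is already built into the derivation of \eqref{rho-face} in the preceding text, where $\rho(F)$ is exhibited as the honest polytope $\prod_{i\in\sigma} D^1 \times \prod_{i\in\tau}\{1\} \times \prod_{i\notin\sigma\cup\tau}\{-1\}$, whose dimension is $|\sigma|$ by inspection. So one may simply cite~\eqref{rho-face} and reduce to the combinatorial count. I expect the main (and only mild) obstacle to be stating the run-counting lemma cleanly and verifying the equality case; everything else is bookkeeping with the partition.
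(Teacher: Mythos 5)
Your proposal is correct and follows essentially the same route as the paper: both read off $\dim F = m-p$ and $\dim\rho(F)=|\sigma|$ from~\eqref{rho-face}, then prove $|\sigma|\le m-p$ with equality exactly when every $U_j$ is a block of consecutive integers (the paper counts by noting $\max(U_j)\notin\sigma$, you count maximal runs within each part — the same bookkeeping in slightly different clothing).
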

\begin{proof} 
From the description of faces of the permutohedron, we see that $\dim F = m - p$.
Relation~\eqref{rho-face} implies that $\dim \rho(F) = |\sigma|$.
Note that for each $j = 1, 2, \ldots, p$ the element $\max(U_j)$ does not belong to $\sigma$, that is, $|\sigma| \le  m-p$.
So, the equality $\dim \rho(F) = \dim F$ holds if and only if all other elements of $[m]$ belong to $\sigma$.
This is equivalent to the description of $U_j$ given in the statement of the lemma.
\end{proof}

\begin{definition}
We say that the elements $i, i+1, \ldots, i+k$ of an ordered matrix
$O$ form a \textit{snake} if there are numbers $0 = k_0 < k_1 <  \cdots < k_r = k$ such that $i + k_j, i+k_j + 1, \ldots, i+k_{j+1}$ lie in the same row for even $j$ and in the same column for odd $j$, or lie in the same column for even $j$ and in the same row for odd $j$.
The elements $i + k_0, \ldots, i + k_r$ are called the \textit{nodes} of the snake.

A snake in an ordered matrix $O$ is called \textit{continuous}
if all its elements lying in the same row (in the same column) appear in consecutive columns (respectively, in consecutive rows).
\end{definition}

\begin{example} In the matrix
$$O = \begin{pmatrix}
1 & 0 & 2 & 3 & 0 & 0 & 0\\
0 & 0 & 0 & 4 & 0 & 0 & 0\\
0 & 0 & 0 & 5 & 6 & 7 & 0\\
0 & 9 & 0 & 0 & 0 & 0 & 10\\
0 & 0 & 0 & 0 & 0 & 8 & 11
\end{pmatrix},$$
the elements $\{1, 2, 3, \ldots, 8 \}$ and $\{9, 10, 11\}$ form snakes. 
The snakes formed by the elements $\{2, 3, \ldots, 7 \}$ and $\{10, 11\}$ are continuous.
\end{example}

\begin{lemma}\label{snake}
Suppose that a matrix $A \in \mathcal{O}^{q \times (m-q+1)}$ 
is such that the dimension of the faces 
$F(c(A))$ and $F(r(A))$ is preserved under the projection $\rho$.
Then all nonzero entries of $A$ form one continuous snake. 

\end{lemma}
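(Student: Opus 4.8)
The plan is to translate the hypothesis about $\rho$ preserving dimensions into a purely combinatorial statement about the matrix $A$, using the preceding Lemma together with the two partitions $c(A) = (A_1 | \cdots | A_q)$ and $r(A) = (A^p | \cdots | A^1)$ associated to $A \in \mathcal{O}^{q \times (m-q+1)}$. By that Lemma, $\dim F(c(A))$ is preserved under $\rho$ iff every column $A_j$ of $A$ is a block of consecutive integers $\{a_j, a_j+1, \ldots, a_j + |A_j| - 1\}$, and likewise $\dim F(r(A))$ is preserved iff every row $A^i$ of $A$ is such a block of consecutive integers. So the whole content of the lemma is: \emph{if every row and every column of an ordered matrix $A$ consists of consecutive integers, then all nonzero entries of $A$ form one continuous snake.}

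First I would establish continuity of each row and column as a set inside the matrix. In an ordered matrix the nonzero entries increase along each row and along each column; if row $A^i = \{s, s+1, \ldots, s+\ell\}$ is a set of consecutive integers, these $\ell+1$ entries occupy $\ell+1$ distinct columns and, because they increase left-to-right, they must occupy a set of columns with no ``gap'' containing a larger-or-equal entry — but one has to argue they are genuinely consecutive columns. The key observation is that the full set $\{0,1,\ldots,q+p-1\}$ is partitioned by the columns (as sets), and separately partitioned by the rows; and both partitions are into blocks of consecutive integers. I would argue that any two partitions of an interval $\{0,\ldots,N\}$ into intervals, one refining the structure ``by columns'' and one ``by rows'' of the same matrix, force the nonzero entries to thread through the matrix monotonically: start with the entry $0$, which lies in some cell $(i_1,j_1)$; its row-block and column-block are intervals $[0,a]$ and $[0,b]$ (they share the element $0$, hence share a common initial segment up to $\min(a,b)$); then the entry $\min(a,b)+1$ lies either in the same row (if $b < a$, continuing along the column first) or the same column (if $a < b$), and one proceeds inductively. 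This produces the sequence of nodes $0 = k_0 < k_1 < \cdots < k_r = q+p-1$ alternating row-runs and column-runs, which is exactly the definition of a snake on the elements $\{0,1,\ldots,q+p-1\}$; continuity (``consecutive columns/rows'') is automatic because the row-blocks and column-blocks are themselves intervals of integers and the entries within each block increase monotonically across consecutive cells.

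The main obstacle I expect is the bookkeeping that shows the snake so produced is \emph{continuous} and actually uses every nonzero entry exactly once, rather than, say, a row-block sitting in non-adjacent columns or two disconnected snakes appearing. To handle this cleanly I would prove an intermediate claim: in an ordered matrix, if the set of entries in row $A^i$ is an interval $[s,s+\ell]$, then those entries occupy $\ell+1$ consecutive columns (and symmetrically for columns). The proof uses that entries increase along columns: if column $j$ and column $j'$ with $j < j' - 1$ both meet row $A^i$ but some column $j''$ strictly between them does not, then the entry of column $j''$ lying in a row below $i$ — which exists since columns are nonzero — would have to be squeezed between the row-$A^i$ entries of columns $j$ and $j'$ by the column-monotonicity in $j$ and $j'$, contradicting that $[s,s+\ell]$ is exactly the interval and is completely used up by row $i$. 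Once this ``intervals sit in consecutive cells'' fact is in hand for both rows and columns, the inductive threading from the entry $0$ upward goes through with only routine verification, and one reads off that the nodes $k_j$ are precisely the indices where the snake turns from a row-run to a column-run or vice versa, giving the required continuous snake on all of $\{0,1,\ldots,q+p-1\}$.
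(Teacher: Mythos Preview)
Your reduction to the combinatorial statement ``every row and every column of $A$ is a block of consecutive integers'' is exactly right, and the threading idea is natural. But the argument you give for the intermediate claim --- that a row whose entries form an interval $[s,s+\ell]$ must occupy $\ell+1$ \emph{consecutive} columns --- does not work as written. You suppose columns $j<j''<j'$ meet row~$i$ at $j$ and~$j'$ but not at~$j''$, take a nonzero entry~$c$ of column~$j''$ in some other row, and assert that $c$ is ``squeezed'' between the entries $a$ at $(i,j)$ and $b$ at $(i,j')$ by column monotonicity. There is no such comparison: monotonicity in an ordered matrix only relates entries sharing a row or a column, and $c$ shares neither with $a$ or~$b$. (Also, the nonzero entries are $\{1,\ldots,m\}$, not $\{0,\ldots,N\}$; you should thread from the entry~$1$, since $0$ is the blank and may repeat.) The claim is true, but it genuinely needs both the row \emph{and} the column hypothesis simultaneously, and you are only invoking one of them in this step.

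The paper sidesteps the intermediate claim entirely. It builds, for each~$i$, the maximal snake~$S_i$ through~$i$ by the observation that if a column is the interval $\{i-l_1,\ldots,i+k_1\}$, then every interior element $i+j$ (with $-l_1<j<k_1$) is the \emph{only} nonzero entry in its row, since its neighbours $i+j\pm 1$ already lie in that column and rows are intervals; thus the snake can only extend at the two ends. The finishing stroke is a counting argument: each maximal snake occupies a set of rows and columns whose total cardinality is $|S_i|+1$, distinct snakes occupy disjoint sets of rows and of columns, the elements sum to~$m$, and the rows plus columns sum to $q+(m-q+1)=m+1$; hence there is exactly one maximal snake. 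Continuity then drops out because every row and column is nonzero. If you want to salvage your approach, the cleanest fix is to replace your squeezing argument by this ``interior elements are alone in their row/column'' observation and then either thread from~$1$ or use the same counting.
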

\begin{proof} Since the dimension of the faces $F(c(A))$ and $F(r(A))$ is preserved under the projection $\rho$, the nonzero entries in each row and column of $A$ are consecutive numbers. 

For each $i \in [m]$ we construct the maximal snake $S_i$ containing $i$ as follows. 
Let $A_j$ be some column of $A$ and let $\{i - l_1, i-l_1+1, \ldots, i+k_1\}$ be all its nonzero entries ($l_1$ or $k_1$ may be 0).
For each $-l_1 < j < k_1$ the element $i + j$ is the unique nonzero entry in its row.
Indeed, all nonzero entries of every row are consecutive numbers, but both $i+j-1$ and $i+j+1$ are in the same column~$A_j$.

For the same reason, the row of the element $i-l_1$ does not contain nonzero entries on the right of it.
We denote the nonzero entries of this row by $i - l_2, i - l_2 + 1, \ldots, i-l_1$.
Then the columns containing the entries $i - l_2 + 1, \ldots, i-l_1 - 1$
do not contain other nonzero entries, and the column containing $i-l_2$ does not contain nonzero entries below $i-l_2$.
Hence, $\{ i-l_2, \ldots, i-l_1, \ldots, i+k_1\}$ is a snake.
Now we extend this snake step by step by adding rows or columns containing the first or the last element of the snake. 
At some step, we get the entries $i-l_a$ and $i+k_b$ that are the only nonzero entries in their row or column not included in the snake. 
As a result, we obtain a maximal snake~$S_i$.

We mark all rows and columns of $A$ that contain at least one element of $S_i$.
At the intersection of such a row and column, we have either zero or an element of the snake.
This implies that the total number of rows and columns occupied by the snake $S_i$ is the number of elements in $S_i$ plus one.

It remains to notice that each $i \in [m]$ belongs to exactly one maximal snake,
so the total number of elements of all snakes is equal to $m$.
Since there are no zero rows and columns in $A$, each row and column of $A$ corresponds to exactly one snake.
Hence, the total number of rows and columns of all snakes is $q + (m-q + 1) = m+1$. It follows that there is only one maximal snake, and it contains all nonzero entries of $A$. 
This snake is continuous since all rows and columns of $A$ are nonzero.
\end{proof}

\begin{theorem}
For any face $F(U_1 | \cdots | U_p)$ of $\Perm^{m-1}$ we have
$$(\rho_* \otimes \rho_*) \Delta_{SU} F(U_1 | \cdots | U_p) = \Delta_{C}(\rho_* F(U_1 | \cdots | U_p)).$$
\end{theorem}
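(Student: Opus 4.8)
The plan is to reduce the statement to the top-dimensional face $F(1,2,\ldots,m)$ and then to a direct term-by-term comparison of the two diagonals. First I would observe that both sides are compatible with the comultiplicative extension: the Li--Cai diagonal $\Delta_{LC}$ on $C_*(I^{m-1})=\bigotimes_i C_*(I)$ is by construction the tensor product of the one-dimensional diagonals (since the cube is a product and its diagonal is the product of the diagonals of the factors), and $\rho$ respects the relevant product decomposition $\Perm^{m-1}\to I^{m-1}$ in the sense that $\rho\bigl(F(U_1|\cdots|U_p)\bigr)$ is the product cell described in~\eqref{rho-face}. Combined with the comultiplicative rule~\eqref{comulrule} for $\Delta_{SU}$, this lets me pass from a general face $F(U_1|\cdots|U_p)$ to the individual blocks $F(U_j)\cong F(1,\ldots,|U_j|)$, so it suffices to prove the identity for $F=F(1,2,\ldots,m)$, i.e.\ to show
\[
  (\rho_*\otimes\rho_*)\Delta_{SU}\bigl(F(1,\ldots,m)\bigr)
  = \Delta_{LC}\bigl(\rho_* F(1,\ldots,m)\bigr).
\]
Here $\rho_* F(1,\ldots,m)=u_{[m-1]}t_\varnothing\underline t_\varnothing$ is the top cell of $I^{m-1}$, whose image under $\Delta_{LC}$ is given by the explicit sum~\eqref{DeltaLC}.

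Next I would analyse the left-hand side using the sum~\eqref{SU-diag} over configuration matrices $A\in\mathcal C^{q\times(m-q+1)}$. Since $\rho_*$ kills every cell whose image drops dimension, only those $A$ for which both $F(c(A))$ and $F(r(A))$ survive under $\rho$ contribute. By Lemma~\ref{snake}, such $A$ are exactly the matrices whose non-zero entries form a single continuous snake; by the lemma preceding Lemma~\ref{snake}, this is equivalent to every block of $c(A)$ and $r(A)$ being an interval $\{a,a+1,\ldots\}$, so $\rho_*F(c(A))$ and $\rho_*F(r(A))$ are genuine top-dimensional-in-their-support cells of the two tensor factors. The key combinatorial step is then to set up a bijection between continuous-snake configuration matrices in $\mathcal C^{q\times(m-q+1)}$ (ranging over all $q$) and subsets $\sigma\subset[m-1]$: a snake is a sequence of alternating horizontal and vertical ``runs'', and recording at each of the $m-1$ consecutive pairs $\{i,i+1\}$ whether the step stays within a run (a ``turn'' versus a ``straight'' move, equivalently whether $i$ and $i+1$ lie in a common row of $A$ or a common column) determines, and is determined by, a subset of $[m-1]$. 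I would check that under this bijection $\rho_*F(c(A))=u_\sigma t_\varnothing\underline t_{[m-1]\setminus\sigma}$ and $\rho_*F(r(A))=u_{[m-1]\setminus\sigma}t_\sigma\underline t_\varnothing$ for the corresponding $\sigma$, matching the summand of~\eqref{DeltaLC} indexed by $[m-1]\setminus\sigma$ (or $\sigma$, up to a bookkeeping reindexing I would fix once and for all); in particular the bijection is one-to-one, so no cancellation or multiplicity issues arise on the surviving terms.

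The main obstacle I expect is the sign: I must verify that $\mathop{\mathit{csgn}}(A)$, under the snake-to-subset bijection, equals the sign $(-1)^{(\sigma,[m-1]\setminus\sigma)}$ (or its counterpart after the reindexing) appearing in~\eqref{DeltaLC}. This requires unwinding the definition of $\mathop{\mathit{csgn}}(A)=(-1)^{\binom q2}\mathop{\mathit{rsgn}}(c(E))\cdot\mathop{\mathit{sgn}}\nolimits_1 r(A)\cdot\mathop{\mathit{sgn}}\nolimits_2 c(E)\cdot\mathop{\mathit{sgn}}\nolimits_2 c(A)$ for a snake matrix: for such $A$ each of $\mathop{\mathit{rsgn}}$, $\mathop{\mathit{sgn}}_1$, $\mathop{\mathit{sgn}}_2$ is a product of a permutation-sign factor $\mathop{\mathit{psgn}}$ of the block structure and an explicit quadratic-in-block-sizes exponent, and I would compute each contribution in terms of the run-length data of the snake, then collect. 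The inversion count $(\sigma,[m-1]\setminus\sigma)$ should emerge precisely from the $\mathop{\mathit{psgn}}$ factors (which reorder the interval blocks of $c(A)$ and $r(A)$ back to $1,2,\ldots$), while the quadratic exponents and $(-1)^{\binom q2}$ should cancel in pairs or combine to the trivial sign; I anticipate this is where the bulk of the careful, but essentially mechanical, computation lies. With the bijection established and the signs matched, the two sides of~\eqref{DeltaLC} agree term by term, and the comultiplicative reduction completes the proof for arbitrary faces.
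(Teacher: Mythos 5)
Your proposal follows essentially the same route as the paper: reduce to the top-dimensional face via the comultiplicative rule~\eqref{comulrule}, use Lemma~\ref{snake} to restrict to matrices whose entries form a single continuous snake, match these bijectively with subsets $\sigma$, and check that $\mathop{\mathit{csgn}}$ reproduces the sign in~\eqref{DeltaLC}. The one point you leave implicit --- and which the paper settles by explicitly deriving each single-snake ordered matrix from a step matrix via a sequence of shift operators --- is that every such matrix really is a configuration matrix, which is what guarantees surjectivity of your snake-to-subset correspondence, i.e.\ that no summand of $\Delta_{LC}$ is missing from $(\rho_*\otimes\rho_*)\Delta_{SU}$.
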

\begin{proof}
By the comultiplicative rule~\eqref{comulrule} it is sufficient to consider the action of the homomorphism on the face of maximal dimension.
Consider $\Delta_{SU}F([m])$. By Lemma~\ref{snake}, the image of $\rho_* \colon C_{m-1}(\Perm^{m-1}) \to C_{m-1}(I^{m-1})$ consists of those
summands in~\eqref{SU-diag} that correspond to configuration matrices with one continuous snake.

We claim that all matrices satisfying the hypothesis of Lemma~\ref{snake} are configuration matrices.
Indeed, every ordered matrix $A$ containing one continuous snake with nodes $1=i_0, i_1, \ldots, i_r = m$ can be derived from a step matrix $E$ of the following form. 
Suppose that the elements $\{1, 2, \ldots, i_1 \}$ form a row and $r$ is even.
The first row of $E$ is formed by entries $\{1, 2, \ldots, i_1, i_2+1, i_2 + 2, \ldots, i_3, i_4+1, i_4 + 2, \ldots, i_{r - 1}\}$ arranged in consecutive columns, and the first column of $E$ is formed by entries
$\{1, i_1 + 1, i_1 + 2, \ldots, i_2, i_3 + 1, i_3 + 2, \ldots, i_r \}$ arranged in consecutive rows. (If $r$ is odd, we should swap $i_{r-1}$ and $i_r$, and if entries $\{1, 2, \ldots, i_1 \}$ form a column, we should transpose the matrix.)

At the $j$th step, we will shift the column of elements $\{i_j + 1, \ldots, i_r \}$ to the right until we reach the column containing the element $i_j$. Then at the $(j+1)$th step, we shift the row of elements $\{i_{j+1} + 1, \ldots, i_{r-1} \}$ down until the row containing element the $i_{j+1}$. The procedure is then performed until we obtain the matrix $A$. The first two steps of this procedure are as follows:
$$\begin{pmatrix}
1 & 2 & \cdots & i_1 & i_2 + 1 & \cdots & i_{r-1} \\
i_1 + 1 & 0 & \cdots & 0 & 0 & \cdots & 0 \\
i_1 + 2 & 0 & \cdots & 0 & 0 & \cdots & 0 \\
\vdots & \vdots & \cdots & \vdots & \vdots & \cdots & \vdots\\
i_2 & 0 & \cdots & 0 & 0 & \cdots & 0 \\
\vdots & \vdots & \cdots & \vdots & \vdots & \cdots & \vdots \\
i_{r} & 0 & \cdots & 0 & 0 & \cdots & 0 
\end{pmatrix} \to
\begin{pmatrix}
1 & 2 & \cdots & i_1 & 0 & \cdots & 0 \\
0 & 0 & \cdots & i_1 + 1 & 0 & \cdots & 0 \\
0 & 0 & \cdots & i_1 + 2 & 0 & \cdots & 0 \\
\vdots & \vdots & \cdots & \vdots & \vdots & \cdots & \vdots\\
0 & 0 & \cdots & i_2 & i_2 + 1 & \cdots & i_{r-1} \\
\vdots & \vdots & \cdots & \vdots & \vdots & \cdots & \vdots \\
0 & 0 & \cdots & i_{r} & 0 & \cdots & 0 
\end{pmatrix}$$

Let $\sigma = \{i_1, \ldots, i_2 - 1, i_3, \ldots, i_4 - 1, \ldots, i_{r} - 1\}$ and $\sigma' = \{1, \ldots, i_1-1, i_2, \ldots, i_3-1, \ldots, i_{r} \}$. Relation~\eqref{rho-face} implies that $\rho(F(c(A))) = u_{\sigma} t_\varnothing \underline{t}_{\sigma'}$ and
$\rho(F(r(A))) = u_{\sigma'} t_{\sigma} \underline{t}_\varnothing$.
Since each subset $\sigma \subset [m]$ corresponds to a single snake, we have
\[
  (\rho_* \otimes \rho_*)\Delta_{SU}F([m]) = \sum_{\sigma \subset [m]} 
  (-1)^\varepsilon u_\sigma t_\varnothing \underline{t}_{\sigma'} \otimes 
  u_{\sigma'}t_{\sigma} \underline{t}_\varnothing.
\]
Direct calculations show that the sign $(-1)^\varepsilon$ coincides with the sign in formula~\eqref{DeltaC} for~$\Delta_{C}$.
\end{proof}

\begin{theorem}\label{real-complex} Let $\K$ be a simplicial complex on the vertex set $[m]$. Then the image
of $\Perm(\K)$ under the projection $\rho \colon \Perm^{m-1} \to I^{m-1}$ is the real moment-angle complex $\R_\mathcal{L}$
for some simplicial complex $\mathcal{L} = \mathcal{L}(\K)$ on the set $[m-1]$.
\end{theorem}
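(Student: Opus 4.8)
The plan is to describe $\rho(\Perm(\K))$ directly as a union of closed faces of the cube $I^{m-1}$ and to recognise it as $\R_\mathcal{L}$ for an explicit $\mathcal{L}$. By the construction of $\Perm(\K)$ we have $\Perm(\K)=\bigcup F(U_1|\cdots|U_p)$, the union over ordered partitions $U_1\sqcup\cdots\sqcup U_p=[m]$ with every part $U_j\in\K$, so by formula~\eqref{rho-face}, with the union taken over the same index set,
\[
  \rho(\Perm(\K))=\bigcup\Bigl(\prod_{i\in\sigma}D^1\times\prod_{i\in\tau}\{1\}\times\prod_{i\notin\sigma\cup\tau}\{-1\}\Bigr),
\]
where $\sigma=\sigma(U_1|\cdots|U_p)$ and $\tau=\tau(U_1|\cdots|U_p)$ are the subsets of $[m-1]$ defined in~\eqref{rho-face}. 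I would then put
\[
  \mathcal{L}=\{\sigma\subseteq[m-1]\colon \sigma=\sigma(U_1|\cdots|U_p)\ \text{for some ordered partition with all}\ U_j\in\K\}
\]
and aim to prove $\rho(\Perm(\K))=\R_\mathcal{L}$.

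The crux is a saturation statement: if an ordered partition with all parts in $\K$ has $\sigma$-value $\sigma$, then for every $\tau\subseteq[m-1]\setminus\sigma$ there is an ordered partition, again with all parts in $\K$, with $\sigma$-value $\sigma$ and $\tau$-value $\tau$. To prove it I would replace the given partition by the partition of $[m]$ into its \emph{$\sigma$-blocks}, that is, the maximal runs $\{a,a+1,\ldots,b\}$ of consecutive integers with $a,\ldots,b-1\in\sigma$. Since $i\in\sigma$ forces $i$ and $i+1$ to lie in one part of the original partition, each $\sigma$-block is contained in a single part and hence lies in $\K$; and, directly from the definition, this partition still has $\sigma$-value $\sigma$. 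The $\sigma$-blocks $C_1,\ldots,C_t$ depend only on $\sigma$, and the set of block boundaries is exactly $\{\max C_1,\ldots,\max C_{t-1}\}=[m-1]\setminus\sigma$. Reordering the blocks as $(C_{w(1)}|\cdots|C_{w(t)})$ for $w\in S_t$ leaves the $\sigma$-value equal to $\sigma$ and produces $\tau$-value $\{\max C_l\colon w^{-1}(l+1)<w^{-1}(l)\}$; since every subset of $\{1,\ldots,t-1\}$ is the descent set of some permutation of $[t]$, every $\tau\subseteq[m-1]\setminus\sigma$ is realised.

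Granting saturation, the rest is bookkeeping. First, $\mathcal{L}$ is a simplicial complex: by saturation, $\sigma\in\mathcal{L}$ precisely when the $\sigma$-block partition has all of its blocks in $\K$, and if $\sigma'\subseteq\sigma$ then the $\sigma'$-blocks refine the $\sigma$-blocks, so each $\sigma'$-block is contained in a $\sigma$-block and hence lies in $\K$, giving $\sigma'\in\mathcal{L}$; also $\varnothing\in\mathcal{L}$ since the partition of $[m]$ into singletons has all parts in $\K$. Second, saturation turns the union above into
\[
  \rho(\Perm(\K))=\bigcup_{\sigma\in\mathcal{L}}\ \bigcup_{\tau\subseteq[m-1]\setminus\sigma}\Bigl(\prod_{i\in\sigma}D^1\times\prod_{i\in\tau}\{1\}\times\prod_{i\notin\sigma\cup\tau}\{-1\}\Bigr),
\]
and the inner union over $\tau$ is exactly $(D^1,S^0)^\sigma$, so the right-hand side equals $\bigcup_{\sigma\in\mathcal{L}}(D^1,S^0)^\sigma=\R_\mathcal{L}$, which is the assertion. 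I expect the only genuinely delicate point to be the $\sigma$-block construction — in particular the observation that passing to $\sigma$-blocks keeps every part inside $\K$ while leaving the $\sigma$-value unchanged; after that, the descent-set reordering and the simplicial-complex check are routine.
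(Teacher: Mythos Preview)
Your proof is correct and follows essentially the same route as the paper's: your $\sigma$-blocks are exactly the paper's sets $V_l$ (together with the leftover singletons), and your saturation-via-descent-sets argument makes explicit what the paper asserts when it says that taking all orderings of these blocks yields the whole of $(D^1,S^0)^J$. The only difference is level of detail—you spell out the descent-set step and the closure of $\mathcal{L}$ under taking subsets, both of which the paper leaves implicit.
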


\begin{proof}
For each $J \subset [m-1]$ we consider the preimage of $(D^1, S^0)^J$ under the map $\rho \colon \Perm^{m-1} \to I^{m-1}$.
We can write $J$ as 
\[
  J = \{j_1, j_1 + 1, \ldots, j_1+k_1, j_2, j_2 + 1, \ldots, j_2 + k_2, \ldots, 
  j_s + k_s \},
\]   
where $j_{i+1} > j_i + k_i + 1$.

Relation~\eqref{rho-face} implies that the projection of a face $F(U_1 | \cdots | U_p)$ is a $|J|$-dimensional face of $(D^1, S^0)^J$ if and only if each set
$V_l = \{j_l, j_l + 1, \ldots, j_l + k_l + 1 \}$ is contained in some $U_i$.
If a face $F(U_1 | \cdots | U_p)$ with such a property belongs to the complex $\Perm(\K)$, then each $V_l$ is contained in~$\K$.
On the other hand, suppose that each $V_l$ belongs to~$\K$. 
Consider the faces of permutohedron that correspond to the partitions of $[m]$ into sets $\{ V_1, \ldots, V_s \} \cup ([m] \backslash \bigcup_i V_i)$ in some order. 
Then the union of images of all these faces is $(D^1, S^0)^J$.

To complete the proof, it remains to note that the sets $J$ that satisfy $(D^1, S^0)^J \subset \rho(\Perm(\K))$ indeed form a simplicial complex.
\end{proof}

The proof of Theorem~\ref{real-complex} provides the following description of the simplicial complex $\mathcal{L}(\K)$. 
A set $J \subset [m-1]$ belongs to $\mathcal{L}(\K)$ if and only if each subset formed by consecutive elements $\{j, j+1, \ldots, j+k \}$ of $J$
together with the element $\{j + k + 1 \}$ forms a simplex of~$\K$.

\begin{example}
\begin{figure}
    \centering
    \includegraphics[width=0.75\linewidth]{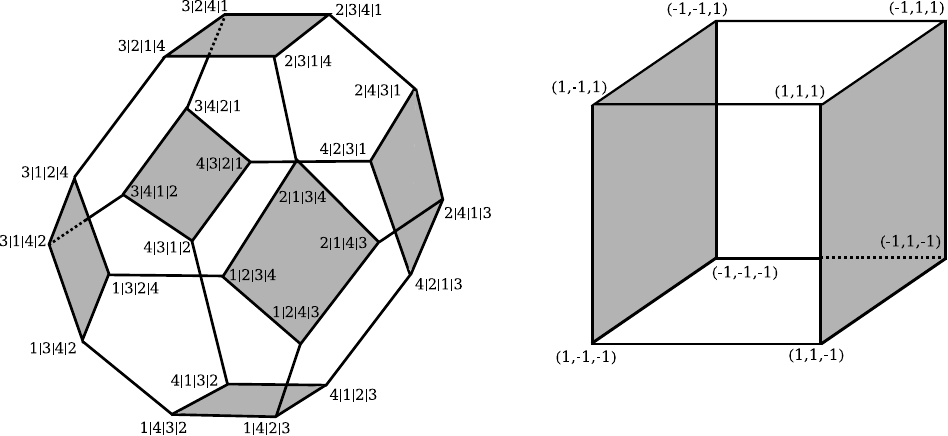}
    \caption{Permutohedral complex $\Perm(\K_1)$ and moment-angle 
    complex $\R_{\mathcal{L}_1}$.}
    \label{perm-k4}
\end{figure}
1. Let $\K_1 = \sk^1 \Delta^3$ be the complete graph on 4 vertices as in Figure~\ref{perm-fig}. Then $\mathcal L(\K_1)$ is the simplicial complex $\mathcal L_1 = \{ \{1\}, \{2\}, \{3\}, \{1, 3\}\}$ on the set $[3] = \{1, 2, 3\}$. 
The real moment-angle complex $\mathcal{L}_1$ is shown in Figure~\ref{perm-k4} on the right.
The projection $\rho \colon \Perm(\K_1) \to \R_{\mathcal{L}_1}$ maps the  front and rear faces $F(12|34)$ and $F(34|12)$ homeomorphically to
the two-dimensional faces of cube that lie in the planes $\{y = -1\}$ and $\{y = 1 \}$, respectively.
The top and bottom faces $F(23|14)$ and $F(14|23)$ are mapped to the edges on the lines $\{x = 1, z = -1 \}$ and $\{x=-1, z=1 \}$, respectively.
The left and right faces $F(13|24)$ and $F(24|13)$ are mapped to the vertices
$(1, -1, 1)$ and $(-1, 1, -1)$, respectively.
\begin{figure}
    \centering
    \includegraphics[width=0.75\linewidth]{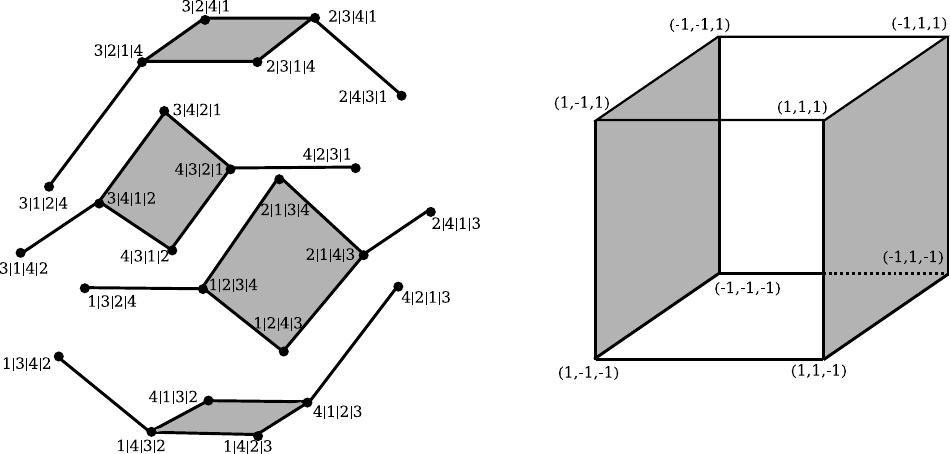}
    \caption{Permutohedral complex $\Perm(\K_2)$ and moment-angle 
    complex $\R_{\mathcal{L}_2}$.}
    \label{perm-1234}
\end{figure}

\smallskip

2. Now let $\K_2$=\begin{picture}(37, 20)
\multiput(10, 2)(15, 0){2}{\line(0, 1){15}}%
\multiput(10, 2)(0, 15){2}{\line(1, 0){15}}%
\put(10, 2){\circle*{3}} \put(25, 2){\circle*{3}} \put(10, 17){\circle*{3}} \put(25, 17){\circle*{3}}%
\put(2, 0){4} \put(30, 0){3} \put(2, 13){1} \put(30, 13){2}
\end{picture} be the boundary of a quadrilateral.
The corresponding permutohedral complex $\Perm(\K_2)$ and its image $\R_{\mathcal L_2}$ under $\rho$ are shown in Figure~\ref{perm-1234}.
We see that the simplicial complex $\mathcal{L}_2$ is the same as the complex $\mathcal L_1$ from the previous example.
\begin{figure}
    \centering
    \includegraphics[width=0.75\linewidth]{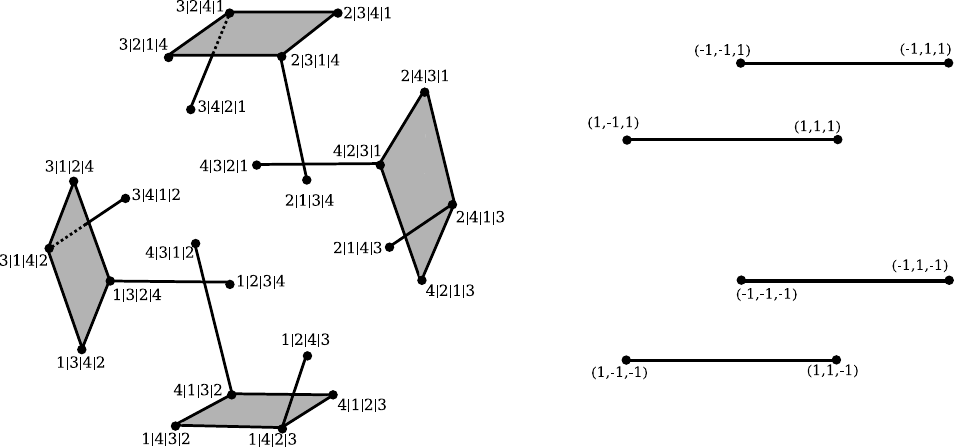}
    \caption{Permutohedral complex $\Perm(\K_3)$ and moment-angle 
    complex $\R_{\mathcal{L}_3}$.}
    \label{perm-1324}
\end{figure}

\smallskip

3. In general, the correspondence $\K \mapsto \mathcal{L}(\K)$ does not respect the combinatorial equivalence. For example, consider the simplicial complex $\K_3 = $
\begin{picture}(37, 20)
\multiput(10, 2)(15, 0){2}{\line(0, 1){15}}%
\multiput(10, 2)(0, 15){2}{\line(1, 0){15}}%
\put(10, 2){\circle*{3}} \put(25, 2){\circle*{3}} \put(10, 17){\circle*{3}} \put(25, 17){\circle*{3}}%
\put(2, 0){4} \put(30, 0){2} \put(2, 13){1} \put(30, 13){3}
\end{picture} that is obtained from $\K_2$ by changing the vertex order.
The permutohedral complex $\Perm(\K_3)$ is combinatorially equivalent to the complex $\Perm(\K_2)$, however, its image under the map $\rho$ is four edges of the cube.
They form the real moment-angle complex $\R_{\mathcal L_3}$ corresponding to the single vertex $\{2 \}$ on the set $[3]$. 
This complex is not combinatorially equivalent to $\mathcal{L}_2$.
The permutohedral complex $\Perm(\K_3)$ and the real moment-angle complex $\R_{\mathcal{L}_3}$ are shown in Figure~\ref{perm-1324}.
\end{example}

\end{document}